\documentclass{amsart}
\usepackage[hyperref]{waynemath}
\usepackage{waynealgtop}
\usepackage[all, cmtip]{xy}
\usepackage{caption}

\makeatletter
	\let\c@equation\c@theorem
\makeatother
\numberwithin{equation}{section}

\makeatletter
	\renewcommand\subsection{\@startsection{subsection}{2}%
		\z@{.5\linespacing\@plus.7\linespacing}{.5\linespacing}%
		{\normalfont\bfseries}} 
	\newcommand{\myitem}[1]{%
		\item[#1]\protected@edef\@currentlabel{#1}%
		}
\makeatother

\newcommand{\makebibliography}{
	\newpage
	\bibliographystyle{plain}
	\bibliography{Groebner.bib}
}
 
\theoremstyle{definition}

\newcommand{\Groebner}{Gr\"obner}
\newcommand{\FComm}{filtered-commutative}
\newcommand{\RN}[1]{\textup{\uppercase\expandafter{\romannumeral#1}}}

\newcommand{\gr}{\mathrm{gr}}
\newcommand{\pr}{\mathrm{pr}}
\newcommand{\LM}{\mathrm{LM}}
\newcommand{\Syz}{\mathrm{Syz}}
\newcommand{\LMg}{\LM_{\gr}}
\newcommand{\LMA}{\LM_{A}}
\newcommand{\LMB}{\LM_{\!\sB}}
\newcommand{\LMBB}{\LM_{\bB}}
\newcommand{\LT}{\mathrm{LT}}
\newcommand{\LTg}{\LT_{\gr}}
\newcommand{\LTA}{\LT_{A}}
\newcommand*{\pnul}{{p_{\mathrm{nul}}}}

\newcommand\blfootnote[1]{%
  \begingroup
  \renewcommand\thefootnote{}\footnote{#1}%
  \addtocounter{footnote}{-1}%
  \endgroup
}

\title[Noncommutative Gr\"obner Bases and Ext groups]{Noncommutative Gr\"obner Bases and Ext groups; Application to the Steenrod Algebra}
\author{Weinan Lin}
\date{}

\begin{document}
\begin{abstract}
	We consider a theory of noncommutative \Groebner{} bases on decreasingly filtered algebras whose associated graded algebras are commutative. We transfer many algorithms that use commutative \Groebner{} bases to this context. As an important application, we implement very efficient algorithms to compute the Ext groups over the Steenrod algebra $\sA$ at the prime $2$. Especially, the cohomology of the Steenrod algebra $\Ext_\sA^{*, *}(\bF_2, \bF_2)$, which plays an important role in algebraic topology, is calculated up to total degree of 261, including the ring structure in this range.
\end{abstract}
\maketitle
\tableofcontents
\blfootnote{Project funded by China Postdoctoral Science Foundation, 2021TQ0015}

\section{Introduction}
The commutative \Groebner{} basis theory \cite{Buchberger} has been a very powerful tool in different areas of mathematics such as commutative algebra, algebraic geometry, homological algebra and applied mathematics. There are several works on generalizing the \Groebner{} bases to noncommutative algebras, including \cite{AL98} for enveloping algebras of Lie algebras, \cite{Bergman78} and \cite{Mora94} for two-sided ideals in noncommutative free algebras and \cite{Li02} exploring the relationship between commutative and noncommutative \Groebner{} bases.

However, most of the \Groebner{} basis theories rely on restrictive monomial orderings that require $M\ge N$ when a monomial $M$ is a multiple of another monomial $N$. This paper introduces a different type of monomial orderings (Definition \ref{def:de6997ec}) which allows us to apply the theory of \Groebner{} bases to broader noncommutative algebras including the Steenrod algebra more effectively.

We define a noncommutative \Groebner{} basis theory on a class of algebras called \FComm{} algebras (Definition \ref{def:63d8c34e}). The main results are the following.

\begin{theorem}[Algorithm \ref{alg:30f8cc72}]
	Given a finite generating set $H$ ($0\notin H$) of a left ideal $J$ of a \FComm{} algebra $A$, there is a generalized Buchberger's algorithm that expands $H$ to a \Groebner{} basis of $J$.
\end{theorem}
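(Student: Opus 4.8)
The plan is to transport Buchberger's algorithm to the \FComm{} setting through the leading-term map into the commutative graded algebra $\gr A$. Initialize $G := H$; then repeatedly choose a pair $(f,g)$ of elements of $G$, form an S-polynomial $S(f,g)$, reduce it to a normal form $r$ modulo $G$ via a division algorithm for left ideals, and adjoin $r$ to $G$ whenever $r \neq 0$; stop once every pair reduces to $0$. Two auxiliary results must be set up first. (a)~A \emph{division algorithm}: for $f \in A$ and finite $G \subset A \setminus \{0\}$, a terminating process producing $f = \sum_{g \in G} q_g g + r$ with each $q_g$ a left combination of lifts of monomials of $\gr A$ and with no monomial of $r$ divisible in $\gr A$ by any $\LMg(g)$. (b)~A \emph{Buchberger criterion}: $G$ is a \Groebner{} basis of the left ideal it generates if and only if every $S(f,g)$, $f,g \in G$, reduces to $0$ modulo $G$; this is proved in the standard way, by rewriting a left combination $\sum_{g\in G} q_g g$ whose leading terms cancel as a sum of S-polynomials plus strictly lower remaining terms. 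Granting (a) and (b), correctness is immediate: at termination all S-polynomials reduce to $0$, so $G$ is a \Groebner{} basis, and $G \supseteq H$ still generates $J$ because every adjoined $r$ lies in $J$ and nothing is discarded.

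The S-polynomial is built by copying the commutative recipe inside $\gr A$ and lifting: let $\ell$ be the least common multiple in $\gr A$ of $\LMg(f)$ and $\LMg(g)$, choose lifts $m_f, m_g \in A$ of the cofactors $\ell / \LMg(f)$ and $\ell / \LMg(g)$, and set $S(f,g) := \lambda_g\, m_f f - \lambda_f\, m_g g$ with $\lambda_f,\lambda_g$ the leading coefficients, so that the two leading terms cancel. For this to make sense one checks that the leading-term map is multiplicative, $\LTg(ab) = \LTg(a)\LTg(b)$ for $a,b \ne 0$; this is exactly where the \FComm{} hypothesis enters, since it makes $\gr A$ a commutative monomial algebra in which the product of two monomials is again a nonzero scalar multiple of a monomial, so no collapse of top terms occurs and the cofactor choice is compatible with left multiplication. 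The outer loop then terminates as in the commutative case: each adjoined $r$ is nonzero with $\LMg(r)$ not divisible by any $\LMg(g)$ currently in $G$, so the monomial ideal $\langle \LMg(g) : g \in G\rangle$ of $\gr A$ strictly enlarges at every adjunction, and ascending chains of such ideals stabilize under the finiteness hypotheses built into the notion of a \FComm{} algebra (Dickson's lemma, resp.\ Noetherianity of the relevant part of $\gr A$); hence only finitely many elements are ever adjoined, while each sweep over the finitely many pairs is finite by (a).

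The principal obstacle is not this bookkeeping but making the division algorithm in (a) terminate, given that the monomial orderings of Definition \ref{def:de6997ec} need \emph{not} satisfy $M \ge N$ when $N$ divides $M$. A reduction step replaces $f$ by $f - \lambda\, m g$, where $mg$ shares the leading monomial of $f$; classically the leading monomial then strictly drops and a well-ordering argument closes the loop, but here it need not drop, so one must argue against the filtration. A reduction step either strictly increases the filtration order of $f$ (the top-filtration symbols cancel) or keeps it fixed while strictly decreasing the leading monomial among the finitely many monomials of that filtration degree, so reduction is well-founded for the lexicographic pair (filtration order, monomial order), provided the \FComm{} structure supplies the requisite local finiteness and boundedness. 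The delicate check is that the lower-order error introduced when lifting a monomial of $\gr A$ back to $A$ cannot resurrect a monomial already cleared at a lower filtration order; this is precisely the content the \FComm{} axioms are designed to furnish, and it is the technical heart of the argument.
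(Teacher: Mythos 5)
Your proposal captures the broad Buchberger skeleton but misses the central new difficulty that the paper's Algorithm~\ref{alg:30f8cc72} is designed to handle, and in doing so it asserts a false lemma. You write that the leading-term map is multiplicative, $\LTg(ab)=\LTg(a)\LTg(b)$, ``since it makes $\gr A$ a commutative monomial algebra in which the product of two monomials is again a nonzero scalar multiple of a monomial, so no collapse of top terms occurs.'' This is not true: $\gr(A)$ is only assumed to be a finitely generated commutative algebra $P/I$, not a polynomial ring. A product $m_1 m_2$ of two standard monomials $m_1,m_2\in\sB$ can fall outside $\sB$ (or even vanish in $\gr(A)$), in which case the leading monomial of $\tilde m_1\tilde m_2$ is strictly smaller than $m_1m_2$ — the paper states this explicitly in the proposition just before Corollary~\ref{cor:86d78874}, and it is the generic situation for the Steenrod algebra, where $\gr(\sA)$ is an exterior algebra and every $(P^i_j)^2=0$.

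Because of this collapse, the classical Buchberger criterion (``all S-polynomials of pairs in $H$ reduce to zero'') is no longer sufficient. The paper adds a second family of critical pairs, step~(1) of Algorithm~\ref{alg:30f8cc72}: for each $a\in H$ and each $g$ in a fixed \Groebner{} basis $G$ of the defining ideal $I\subset P$, one forms $q=\lcm(\LMB(a),\LM(g))/\LMB(a)$, reduces $\tilde q a$ by $H$, and appends the result if nonzero. This is what controls the situation where multiplying $a$ by a lifted monomial $\tilde q$ pushes the would-be leading term $q\LMB(a)$ out of $\sB$; the proof of Proposition~\ref{prop:d2c371dc} makes essential use of exactly these reductions in the case $m_1\LMB(h_1)\notin\sB$. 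Your proposal contains only the S-polynomial pairs and therefore has a genuine gap: with only those pairs the termination condition can be reached while $H$ is still not a \Groebner{} basis. (The analogous phenomenon in commutative algebra is computing a \Groebner{} basis in a quotient ring $P/I$ rather than in $P$ itself, where one must also account for the defining relations.)

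Two smaller comments. First, on termination of reduction: the paper's argument is simpler than your two-tier ``(filtration order, monomial order)'' descent. The degree-reversed ordering of Definition~\ref{def:de6997ec} together with $F_{\pnul}A=0$ already makes $\tilde\sB$ a well-ordered set, and each one-step reduction replaces the chosen term of $a$ by strictly smaller elements of $\tilde\sB$ (precisely because $\LTg(\pr(\widetilde{(m_i/m)}\,b))=c\,m_i$ when $m_i\in\sB$), so termination is immediate from well-ordering; the ``resurrection'' worry you raise is automatically handled because higher filtration degree means \emph{smaller} in this ordering. Second, the outer-loop termination argument should be phrased as strict growth of the ideal $(\LMB(h):h\in H)\subset P$, which stabilizes by Noetherianity of $P$ — not of ``the relevant part of $\gr(A)$.''
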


\begin{theorem}[Algorithm \ref{alg:353e422a}]
	Let $N=A^r/M$ be a graded left module over a \FComm{} graded algebra $A$ where $M$ is a left submodule of $A^r$. There is an algorithm that constructs the first $s+1$ terms $F_0,\dots,F_s$ of a free $A$-resolution
	$$\cdots\to F_s\fto{d_s}\cdots\fto{d_3} F_2\fto{d_2} F_1\fto{d_1} F_0=A^r\fto{\epsilon} N$$
	for any $s$.
\end{theorem}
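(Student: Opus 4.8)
\emph{Step 1 (the presentation $F_1\to F_0$).} The plan is to transport Schreyer's algorithm for free resolutions to the \FComm{} setting, running the \Groebner{}-basis machinery of the first theorem once and then propagating a \Groebner{} basis through the successive syzygy modules. First I would extend left \Groebner{} bases from ideals of $A$ to submodules of a finitely generated free module $A^t$: take the monomials of $A^t$ to be the elements $m\,e_i$ with $m$ a monomial of $\gr A$ and $e_1,\dots,e_t$ the standard basis, let a chosen order among the $e_i$ together with the given ordering on $A$ induce an ordering of the same kind on $A^t$, and observe that the generalized Buchberger algorithm of the first theorem applies unchanged. Running it on a finite homogeneous generating set of $M\subseteq A^r$ (available internal-degree by internal-degree in the finite-type situations at hand, or from Noetherianity of $\gr A$) yields a finite \Groebner{} basis $G=\{g_1,\dots,g_{t_1}\}$ of $M$. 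Set $F_1=\bigoplus_{j=1}^{t_1}A\,f_j$ with $f_j$ placed in internal degree $\deg g_j$, let $d_1\colon F_1\to F_0=A^r$ send $f_j\mapsto g_j$, and take $\epsilon\colon A^r\to N=A^r/M$ to be the quotient map, so $\ker\epsilon=M$. Since $G$ generates $M$ and the $g_j$ are homogeneous, $F_1\fto{d_1}F_0\fto{\epsilon}N$ is exact at $F_0$ and $d_1$ is degree-preserving.

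\emph{Step 2 (Schreyer's theorem over a \FComm{} algebra).} The heart of the matter is an analogue of Schreyer's theorem. For each pair $i<j$ whose leading monomials $\LMg(g_i),\LMg(g_j)$ lie in the same $e_k$-component, form the $S$-element $S_{ij}$; since $G$ is a \Groebner{} basis, $S_{ij}$ reduces to $0$ modulo $G$, and the record of that reduction is an explicit syzygy $\sigma_{ij}\in\ker d_1\subseteq F_1$. Equip $F_1$ with the Schreyer ordering induced by $G$ — compare $m\,f_i$ with $m'\,f_j$ by first comparing $\LMg(m\,g_i)$ and $\LMg(m'\,g_j)$ in $F_0$ and breaking ties by index — so that $\LMg(\sigma_{ij})=\bigl(\operatorname{lcm}(\LMg(g_i),\LMg(g_j))/\LMg(g_i)\bigr)f_i$; the claim is then that $\{\sigma_{ij}\}$ is a \Groebner{} basis of $\ker d_1$ for this ordering. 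The combinatorial skeleton of the classical proof manipulates only leading monomials, which lie in the commutative ring $\gr A$, so it should transfer essentially verbatim; the genuinely new point — and the step I expect to be the main obstacle — is that in a decreasingly filtered algebra, cancelling a leading term may merely lower the filtration rather than end the reduction, so one must check that the lower-filtration correction terms tracked by the generalized Buchberger process reassemble into honest elements of $F_1$ (not merely of $\gr F_1$) and that this bookkeeping terminates. This is precisely where commutativity of $\gr A$, together with the finiteness it provides in each internal degree, is indispensable.

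\emph{Step 3 (iteration and finiteness).} Put $F_2=\bigoplus A\,f_{ij}$ with $f_{ij}$ in degree $\deg\sigma_{ij}$ and $d_2(f_{ij})=\sigma_{ij}$; by Step 2, $\operatorname{im}d_2=\ker d_1$, so the sequence is exact at $F_1$, and $d_2$ is degree-preserving. Moreover Step 2 tells us the $\sigma_{ij}$ are \emph{already} a \Groebner{} basis for the Schreyer ordering on $F_2$, so $\ker d_2$ is obtained simply by reducing the $S$-elements among them — no further Buchberger run is needed — and the construction repeats verbatim. Iterating $s$ times produces $F_0,\dots,F_s$ together with $d_1,\dots,d_s$ such that $\operatorname{im}d_k=\ker d_{k-1}$ for $1\le k\le s$ (with $d_0:=\epsilon$), i.e.\ the first $s+1$ terms of a graded free resolution of $N$; each $F_k$ is a \emph{finite} direct sum of shifts of $A$ because every syzygy module met along the way is finitely generated. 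Finally, in the connected graded case one may, after each step, delete a generator whose leading monomial is divisible by another leading monomial of the same \Groebner{} basis, thereby obtaining the minimal resolution that is actually implemented over the Steenrod algebra.
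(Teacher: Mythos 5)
Your overall outline follows the paper's broad strategy — build $F_{i+1}$ from syzygies of a \Groebner{} basis at stage $i$ — but there is a concrete gap in Step 2 that would make the construction fail, and your Step 3 relies on a claim the paper neither proves nor needs.

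The gap: you form only the $S$-elements $S_{ij}$ between pairs of \Groebner{}-basis elements $g_i,g_j$, and claim the resulting syzygies $\{\sigma_{ij}\}$ generate (indeed are a \Groebner{} basis of) $\ker d_1$. In the \FComm{} setting that is false. Here $\gr(A)=P/I$ is a proper quotient, and the leading monomials live in $\sB$, the complement of $\LM(I)$; Algorithm~\ref{alg:30f8cc72} therefore has a second kind of reduction, step~(1), in which one multiplies $h_i$ by $\tilde q$ for $q=\lcm(\LMB(h_i),\LM(g))/\LMB(h_i)$ with $g\in G$ a generator of the \Groebner{} basis of $I$. The corresponding syzygies $\sigma_{i,g}$ (one for each $h_i\in H$ and each $g\in G$ whose leading monomial shares a variable with $\LMB(h_i)$) are an indispensable part of the generating set of $\Syz(H)$ — this is exactly what the paper's syzygy theorem asserts, and its proof (which reruns the rewriting of Proposition~\ref{prop:d2c371dc} with $a=0$) needs both families. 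For the Steenrod algebra $\gr(\sA)$ is exterior, $I=((P^i_j)^2)$, so every single $h_i$ contributes syzygies of the form $\sigma_{i,g}$ expressing that $\tilde P^i_j\cdot\tilde P^i_j$ lands in strictly higher filtration; omitting them would give a module strictly smaller than $\ker d_1$ already at the first stage.

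Secondly, the paper does not invoke a Schreyer-ordering analogue at all. Its construction at each stage is: minimize the presentation (Algorithm~\ref{alg:798e215b}), extract a minimal generating set $X$ of the kernel (Algorithm~\ref{alg:76dc86bf}), compute a \Groebner{} basis $H$ of the module generated by $X$, collect \emph{both} families $\sigma_{i,g}$ and $\sigma_{i,j}$ to generate $\Syz(H)$, and then pull back to $\Syz(X)$ via the change-of-basis matrix $Q$ and Theorem~\ref{thm:ef5c591f}; the next stage begins with a fresh Buchberger run on this generating set. Your proposal that the $\sigma_{ij}$ are automatically a \Groebner{} basis for a Schreyer ordering, so that ``no further Buchberger run is needed,'' is an extra and unproven claim; you yourself flag it as ``the main obstacle'' and say it ``should transfer essentially verbatim,'' but in a decreasingly filtered algebra with $\gr(A)=P/I$ the leading-term bookkeeping in the classical Schreyer proof does not carry over without the $\sigma_{i,g}$ and a careful re-derivation, which is precisely what the paper avoids by choosing the more conservative route. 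Finally, a minor point: building $F_1$ directly on a \Groebner{} basis of $M$ rather than on a minimal generating set is fine for producing \emph{some} free resolution (which is all the theorem asks), but you would then need to minimize at every stage rather than only ``at the very end'' if you want to read off $\Ext$ ranks as the paper does.
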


The algorithms above have many applications and we focus on developing an algorithm for computing $\Ext$ groups which is important in homological algebra and algebraic topology. We then apply the algorithm to the mod 2 Steenrod algebra $\sA$ which yields the following computational result.

\begin{theorem}\label{thm:f088d56}
	The bigraded algebra
	$$\bigoplus_{t\le 261}\Ext_{\sA}^{s,t}(\bF_2,\bF_2)$$
	in total degrees up to 261 is an algebra with 2914 indecomposables, 23822 basis elements and 227498 indecomposable relations. The complete charts are in given in \cite{LinZenodo}.
\end{theorem}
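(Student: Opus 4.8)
Since Theorem~\ref{thm:f088d56} records the outcome of a computation, the plan is to realize the bigraded algebra $\bigoplus_{t\le 261}\Ext_\sA^{s,t}(\bF_2,\bF_2)$ explicitly by feeding the Steenrod algebra into the algorithms of this paper, and then to read off the three invariants as counts of minimal algebra generators, of $\bF_2$-basis elements, and of minimal relations.

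\emph{Step 1: put $\sA$ in the \FComm{} framework.} I would first check that $\sA$ over $\bF_2$, with its admissible-monomial basis, is an instance of a \FComm{} graded algebra in the sense of Definition~\ref{def:63d8c34e}: one imposes on the ambient free associative algebra (on the generators $Sq^1,Sq^2,Sq^4,\dots$ with Adem relations) the decreasing filtration whose associated graded $\gr\sA$ is commutative --- e.g.\ the filtration underlying the May spectral sequence --- and verifies that a monomial ordering of the type in Definition~\ref{def:de6997ec} is available. Algorithm~\ref{alg:30f8cc72} applied to the defining relations then yields a \Groebner{} basis, so that reduction to admissible form, i.e.\ arithmetic in $\sA$, is effective; and $\bF_2$ is presented as the cyclic left module $N=A^1/M$ with $M$ the augmentation ideal, generated as a left ideal by the $Sq^{2^i}$.

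\emph{Step 2: minimal resolution and additive $\Ext$.} Applying Algorithm~\ref{alg:353e422a} to $N=\bF_2$, run so as to produce a \emph{minimal} free resolution, gives the free modules $F_0,F_1,F_2,\dots$; since $F_s$ is necessarily generated in internal degrees $\ge s$, only $s\le 261$ are relevant for $t\le 261$, so a finite homological range suffices. By minimality, $\dim_{\bF_2}\Ext_\sA^{s,t}(\bF_2,\bF_2)$ equals the number of free generators of $F_s$ in internal degree $t$, so the basis-element count is obtained by tallying generators; this should produce the figure $23822$.

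\emph{Step 3: the ring structure.} Because $\sA$ is cocommutative, $\Ext_\sA^{*,*}(\bF_2,\bF_2)$ is commutative over $\bF_2$, and products are Yoneda composites: for each minimal generator representing a class $x\in\Ext^{s,t}$ one lifts ``multiplication by $x$'' to a chain self-map $F_\bullet\to F_\bullet$ of bidegree $(s,t)$ --- each step of which is a \Groebner{}-reduction in $\sA$ --- and records its effect on generators. From the resulting multiplication table one extracts the indecomposables $Q=\Ext^+/(\Ext^+)^2$ (dimension $2914$) and, presenting $\Ext$ as a quotient of the free commutative $\bF_2$-algebra on these generators, the minimal generating set of the relation ideal (cardinality $227498$); both are linear algebra over $\bF_2$ carried out degree by degree.

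\emph{Main obstacle and verification.} The difficulty is entirely one of scale: the resolution and the lifting maps amount to enormous sparse $\bF_2$-linear systems in each of the $262$ internal degrees, and it is precisely the efficiency of the \FComm{} \Groebner{} machinery over naive linear algebra that brings degree $261$ within reach; keeping track of which products and relations are already implied by lower-degree data, and controlling memory, is where the real effort goes. Correctness would be confirmed by comparison with previously published additive $\Ext$ charts and low-degree ring computations, by internal consistency with known relations and May-spectral-sequence constraints, and ultimately by the complete tables deposited in \cite{LinZenodo}, which serve as the verifiable certificate of the stated counts.
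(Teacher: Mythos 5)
Your overall plan—realize the range of $\Ext_\sA^{*,*}(\bF_2,\bF_2)$ as a minimal free resolution built by Algorithm~\ref{alg:353e422a}, read off $\dim_{\bF_2}\Ext^{s,t}$ as the number of generators of $F_s$ in degree $t$, lift multiplication to chain maps to get the ring structure, and then count indecomposables and minimal relations—is exactly what the paper does, and your remarks about finiteness of the relevant homological range and about verification against known low-degree data are apt.

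The substantive discrepancy is in your Step~1, where you set up the filtered-commutative structure. The paper does \emph{not} work with the admissible basis, the generators $Sq^{2^i}$, or a presentation of $\sA$ as a quotient of a free associative algebra modulo Adem relations, and the filtration it uses is not May's filtration. Instead it works in the Milnor basis $\{P(R)\}$ and defines a decreasing filtration by the weight $w(P(r_1,r_2,\dots))=\sum_{k,i}(2k-1)a_{k,i}$ where $r_k=\sum_i a_{k,i}2^i$; the resulting associated graded is an \emph{exterior} algebra $E[P^i_j]$ on classes dual to $\xi_j^{2^i}$, and the lifted basis $\tilde\sB$ is the ``$P^s_t$ basis'' of Monks, not the admissible basis. (The paper explicitly remarks that this associated graded is the Koszul-associated algebra of May's associated graded, not May's associated graded itself, which is a truncated polynomial algebra and corresponds to an increasing filtration—so ``the filtration underlying the May spectral sequence'' is not the one used, and would not directly satisfy the decreasing-filtration condition $F_{\pnul}A=0$.) Relatedly, there is no step where Algorithm~\ref{alg:30f8cc72} is ``applied to the defining relations'' to make arithmetic in $\sA$ effective: the framework assumes one already has a $k$-basis $\tilde\sB$ of $A$ with an explicit multiplication rule (here coming from Milnor's product formula), and the \Groebner{} machinery is deployed on left ideals and submodules of free modules, not on the presentation of $\sA$ itself. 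Aside from these set-up issues, your Steps~2 and~3 track the paper's method, including the use of Bruner-style chain lifts for products and the emphasis on degree-by-degree elimination as the key efficiency gain.
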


This algebra is the $E_2$ page of the Adams spectral sequence converging to the stable homotopy groups of spheres completed at the prime 2 (See \cite{Adams58}). It plays a fundamental role in the stable homotopy theory and is related to many important problems such as the Hopf invariant one problem and the Kervaire invariant one problem. The computation of the stable homotopy groups of spheres relies on these machine outputs for the Adams $E_2$ page. We refer the interested readers to Isaksen, Wang and Xu [6] for the most up to date calculation.

Previously, the most extensive and publicly available calculation of $\Ext_{\sA}^{*,*}(\bF_2,\bF_2)$ is done by Bruner and Rognes \cite{BR}, who compute the Ext up to dimension of 200. There are other computer programs made by Nassau \cite{Nassau} and Wang \cite{Wang}. Our new algorithm is very efficient in both memory and time, which can compute the resolution in the range of 200 within one day on a personal computer and we spent 88+63 days to obtain the resolution and products for Theorem \ref{thm:f088d56} on a 64-core CPU machine. This extends their calculation by a few magnitudes. The interested reader can find the latest implementation of the algorithm on Github \cite{LinGH}, which can run twice as fast as the old version does for Theorem \ref{thm:f088d56}.

\section{Filtered-Commutative Algebras}
Let $k$ be any field and $A$ an algebra defined below.
\begin{definition}\label{def:63d8c34e}
	An algebra $A$ over $k$ is called a \emph{(decreasingly-) \FComm{} algebra} if it satisfies the following conditions.
	\begin{enumerate}
		\item The algebra $A$ is equipped with a filtration of ideals
		$$A=F_0A\supset F_1A\supset F_2A\supset \cdots$$
		such that 
		$$F_pA\cdot F_qA\subset F_{p+q}A$$
		and $F_{\pnul}A=0$ for some number $\pnul>0$.
		\item The associated graded algebra $\gr(A)$ defined by
		$$\gr_p(A)=F_pA/F_{p+1}A$$
		is commutative.
	\end{enumerate}
\end{definition}

\begin{definition}
	We say that a nonzero element $a\in A$ projects to $f\in \gr_p(A)$, if $p$ is the maximal number such that $a\in F_pA$ and $f$ is the image of $a$ via the map $F_pA\to F_pA/F_{p+1}A$. We write 
	$$v(a)=p\text{ and }\pr(a)=f.$$
	If $a$ is zero, we define 
	$$v(0)=\infty\text{ and }\pr(0)=0.$$ 
	We also call $a$ a \emph{lift} of $f$.
\end{definition}

It is easy to see that another element $a^\prime\in A$ projects to the same element $f\in \gr_p(A)$ if and only if $a-a^\prime\in F_{p+1}A$. The projection map is \emph{not} a linear map between vector spaces since $\pr(a-a^\prime)$ here could be nonzero.

\begin{proposition}
	For nonzero elements $a,b\in A$, we have
	$$v(ab)\ge v(a)+v(b)$$
	or equivalently,
	$$|\pr(ab)|\ge |\pr(a)|+|\pr(b)|$$
	where $|\cdot|$ is the degree of elements of $\gr(A)$. The equality holds if and only if 
	$$\pr(a)\pr(b)\neq 0$$ 
	in $\gr(A)$.
\end{proposition}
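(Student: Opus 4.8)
The plan is to get the inequality straight from the multiplicativity axiom of the filtration, observe that the two displayed formulations say the same thing, and then settle the equality case by passing to the associated graded algebra. Throughout, write $p = v(a)$ and $q = v(b)$, so that $a \in F_pA$, $b \in F_qA$, and $|\pr(a)| = p$, $|\pr(b)| = q$ by the definition of $\pr$.

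First, the axiom $F_pA \cdot F_qA \subset F_{p+q}A$ gives $ab \in F_{p+q}A$, hence $v(ab) \ge p + q = v(a) + v(b)$; this remains correct when $ab = 0$, since then $v(ab) = \infty$. Because $|\pr(c)| = v(c)$ for every nonzero $c$, this is exactly the assertion $|\pr(ab)| \ge |\pr(a)| + |\pr(b)|$, so the ``equivalently'' clause requires no separate argument.

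For the equality statement I would first record that the multiplication of $A$ induces a well-defined map $\gr_p(A) \times \gr_q(A) \to \gr_{p+q}(A)$, $(f,g) \mapsto fg$: if $a, a'$ are lifts of $f$ and $b, b'$ are lifts of $g$, then $a - a' \in F_{p+1}A$ and $b - b' \in F_{q+1}A$, so $ab - a'b' = (a-a')b + a'(b - b') \in F_{p+1}A\cdot F_qA + F_pA\cdot F_{q+1}A \subset F_{p+q+1}A$, which means the image of $ab$ in $\gr_{p+q}(A) = F_{p+q}A/F_{p+q+1}A$ depends only on $f$ and $g$; by construction that image is $\pr(a)\pr(b)$. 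Now if $\pr(a)\pr(b) \neq 0$ in $\gr(A)$, then $ab \notin F_{p+q+1}A$, so $p+q$ is the maximal index with $ab \in F_{p+q}A$; that is, $v(ab) = p+q$, equality holds, and moreover $\pr(ab) = \pr(a)\pr(b)$. Conversely, if $\pr(a)\pr(b) = 0$, then $ab \in F_{p+q+1}A$, which forces $v(ab) \ge p+q+1 > v(a)+v(b)$, a strict inequality.

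There is no real obstacle here; the only point demanding care is the well-definedness computation above together with the bookkeeping that the class of $ab$ in the degree-$(p+q)$ piece of $\gr(A)$ is precisely the product $\pr(a)\pr(b)$ — this is what ties the ring structure of $\gr(A)$ to the valuation on $A$. One should also keep in mind the degenerate possibility $ab = 0$ with $a, b \neq 0$, which is consistent with $\pr(a)\pr(b)$ vanishing in $\gr(A)$.
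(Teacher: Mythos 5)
Your argument is correct and proceeds by the same route the paper intends: the inequality falls directly out of the axiom $F_pA\cdot F_qA\subset F_{p+q}A$, and the equality case comes from unwinding how the product on $\gr(A)$ is defined in degree $p+q$. The paper compresses all of this into a one-line remark, so your write-up is simply a fuller version of the same observation (and your well-definedness check and the identification $\pr(ab)=\pr(a)\pr(b)$ in the equality case are the right details to supply).
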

\begin{proof}
	This is the direct consequence of the condition $F_pA\cdot F_qA\subset F_{p+q}A$.
\end{proof}

The condition $F_{\pnul}A=0$ in Definition \ref{def:63d8c34e} makes sure that a basis of $\gr(A)$ can lift to a basis of $A$. More precisely, if $\sB=\{m_i\}$ is a $k$-basis of $\gr(A)$, and $\tilde m_i\in A$ projects to $m_i$, then $\tilde \sB=\{\tilde m_i\}$ is a $k$-basis of $A$.

\begin{remark}
	The definition of a \FComm{} algebra is a generalization of commutative algebras since if $A$ is commutative, we can filter it with
	$$A=F_0A\subset F_1A=\{0\}.$$
	It is obvious that $A$ satisfies the conditions in Definition \ref{def:63d8c34e}. There are also non-commutative examples. The important ones that we will study in this paper are the truncated Steenrod algebra at the prime 2.
\end{remark}

From now on, we study \FComm{} algebras $A$ such that $\gr(A)$ is finitely generated. Hence the associated graded algebra can be presented by
\begin{equation*}
	\gr(A)=k[x_1,\dots,x_n]/I
\end{equation*}
for some $n$ and some ideal $I$ of the polynomial ring $P=k[x_1,\dots,x_n]$. Since $\gr(A)$ is graded, we assume that $x_i$ and $I$ are homogeneous. The degree of a homogeneous element $f\in P$ is denoted by $|f|$.

\section{Bases and Monomial Orderings}
In order to develop a theory of \Groebner{} bases over $A$, we want to choose a basis for $A$ and a well-order on the basis. Since $\gr(A)$ is a commutative algebra, we will do it on $\gr(A)$ first and then lift it to $A$.

\begin{definition}\label{def:de6997ec}
	A \emph{degree-reversed admissible monomial ordering} for the graded polynomial ring $P=k[x_1,\dots,x_n]$ is a total ordering on the monomials such that for any monomials $M, N, L$,
	\begin{enumerate}
		\item $M\le N\Longleftrightarrow ML\le NL$,
		\item $|L|=0\Longrightarrow M\le ML$.
		\item $|M|>|N|\Longrightarrow M<N$.
	\end{enumerate}
\end{definition}

\begin{remark}
	This is not a conventional monomial ordering because it is not a well-order on all monomials.
	However, we have $F_\pnul A=0$ for some $\pnul>0$ and hence $\gr_i(A)=0$ for all $i\ge \pnul$.
	The conditions (1)(2) make sure that it is a well-order in each degree, and the condition (3) makes sure that it is a well-order on all monomials in degrees less than $\pnul$.
\end{remark}

Assume that $P=k[x_1,\dots,x_n]$ is equipped with a degree-reversed admissible monomial ordering defined above. We denote the leading monomial of $f\in P$ by $\LM(f)$, which is the largest monomial in $f$. The leading term $\LT(f)$ is the leading monomial together with its coefficient in $f$. Recall the following definition of a commutative \Groebner{} basis.

\begin{definition}
	A \Groebner{} basis $G$ of an ideal $I$ of $P$ is a finite generating set of $I$ such that the following two ideals of $P$ are equal:
	$$(\LM(f): f\in I)=(\LM(g): g\in G).$$
\end{definition}

Assume that $G$ is a \Groebner{} basis of the ideal $I$. Let $\sM$ be the set of all monomials in $P$. The definition of \Groebner{} bases indicates that the set
$$\sB=\{m\in \sM : \LM(g)\nmid  m \text{ for all } g\in G\}$$
(modulo $I$) is a basis of $\gr(A)=P/I$ as a vector space. Note that if $m\in \sB$, then all monomials that divide $m$ also belong to $\sB$ by definition.

\begin{definition}
	For any nonzero $f\in \gr(A)=P/I$, it can be uniquely written as
	$$f=c_1m_1+\cdots+c_l m_l \mod I$$
	where $c_i\in k$, $m_i\in \sB$ and $m_1>\cdots>m_l$. The leading monomial of $f$ is defined to be $\LMg(f)=m_1$, while the leading term of $f$ is defined to be $\LTg(f)=c_1m_1$.
\end{definition}

Note that if $m\notin \sB$, then $\LMg(m)<m$.

We choose and fix $X_i\in A$ such that $X_i$ projects to $x_i$ in the associated graded algebra for each $i$. Then for $m=x_1^{r_1}\cdots x_n^{r_n}\in \sB$, the product
$$\tilde m = X_1^{r_1}\cdots X_n^{r_n}\in A$$
projects to $m+I$ since $m+I$ is non-trivial in $\gr(A)=P/I$. Hence the set
$$\tilde \sB=\{\tilde m : m \in \sB\}$$
is a basis of $A$. We order $\tilde \sB$ the same way as we order $\sB$ via the lifting. Note that both $\sB$ and $\tilde \sB$ are well-ordered.

\begin{definition}
	For any $f\in \gr(A)$, we write
	$$f=c_1m_1+\cdots+c_l m_l \mod I$$
	where $c_i\in k$ and $m_i\in \sB$. We define
	$$\tilde f=c_1\tilde m_1+\cdots+c_l\tilde m_l\in A$$
	which is a lift of $f$. Sometimes we also write $\ell(f)=\tilde f$.
\end{definition}

\begin{definition}
	For any nonzero $a\in A$, we can write $a$ in the form of
	$$a=c_1\tilde m_1+\cdots+c_l\tilde m_l$$
	where $c_i\in k$, $m_i\in \sB$ and $m_1>\cdots>m_l$.
	We call $\LMA(a)=\tilde m_1\in \tilde \sB$ the \emph{leading monomial} of $a$, and $\LTA(a)=c_1\tilde m_1$ the \emph{leading term} of $a$. For convenience we also define $\LMB(a)=m_1\in \sB$ and we see that $\LMB=\pr\circ \LM_A$.
\end{definition}

\begin{proposition}
	For any nonzero $a\in A$, we have
	$$\LMB(a)=\pr(\LMA(a))=\LMg(\pr(a))$$
	or equivalently
	$$\LMA=\ell\circ\LMg\circ\pr$$
\end{proposition}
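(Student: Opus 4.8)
The plan is to unwind the definitions of the three quantities $\LMB(a)$, $\pr(\LMA(a))$, and $\LMg(\pr(a))$ and check they agree for an arbitrary nonzero $a\in A$. Writing $a=c_1\tilde m_1+\cdots+c_l\tilde m_l$ with $m_i\in\sB$, $c_i\in k$, and $m_1>\cdots>m_l$ in the fixed well-ordering, we have $\LMA(a)=\tilde m_1$ and hence $\LMB(a)=m_1$ by definition, and $\pr(\LMA(a))=\pr(\tilde m_1)=m_1+I$ since $\tilde m_i$ is chosen to project to $m_i+I$ (which is nonzero in $\gr(A)$ because $m_i\in\sB$). So $\LMB(a)=\pr(\LMA(a))$ is essentially immediate from the definitions, and the equivalent formulation $\LMB=\pr\circ\LM_A$ is already recorded in the previous definition; the content is the identification with $\LMg(\pr(a))$.

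First I would compute $\pr(a)$. The key point is to determine the filtration level $v(a)$ and the leading graded piece. Since $\tilde m_i=X_1^{r_1}\cdots X_n^{r_n}$ projects to $m_i+I\in\gr_{|m_i|}(A)$, each $\tilde m_i$ lies in $F_{|m_i|}A\setminus F_{|m_i|+1}A$; more precisely $\tilde m_i\equiv \ell(m_i)\bmod I$ fits into the filtration at level $|m_i|$. By condition (3) of the degree-reversed admissible ordering, $m_1>\cdots>m_l$ forces $|m_1|\le|m_2|\le\cdots\le|m_l|$, so among the summands the one of lowest filtration level is $c_1\tilde m_1$, possibly tied with others of the same degree. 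Thus $v(a)=|m_1|=:p$, and modulo $F_{p+1}A$ the element $a$ reduces to the sum of those $c_i\tilde m_i$ with $|m_i|=p$; passing to $\gr_p(A)=F_pA/F_{p+1}A$ this gives $\pr(a)=\sum_{|m_i|=p}c_i(m_i+I)$. Hence $\pr(a)$ is a nonzero element of $\gr(A)$ whose expansion in the basis $\sB$ is exactly $\sum_{|m_i|=p}c_i m_i$, and since all these $m_i$ are distinct and the largest is $m_1$ (it is the largest among all the $m_i$, a fortiori among those of degree $p$), we get $\LMg(\pr(a))=m_1$. Therefore $\ell(\LMg(\pr(a)))=\tilde m_1=\LMA(a)$, which is the second displayed identity, and applying $\pr$ recovers $\LMB(a)=m_1$.

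The main obstacle — really the only subtle point — is making precise the claim that passing to $\gr_p(A)$ kills exactly the higher-degree tail and leaves $\sum_{|m_i|=p}c_i m_i$, i.e. that this sum is genuinely nonzero in $\gr_p(A)$ so that $v(a)$ is exactly $p$ and not larger. This uses two facts already available: that $\{m+I:m\in\sB\}$ is a $k$-basis of $\gr(A)$ (from the \Groebner{} basis property), so no nontrivial $k$-combination of the $m_i+I$ vanishes; and the remark after Definition \ref{def:63d8c34e} that a basis of $\gr(A)$ lifts to a basis $\tilde\sB$ of $A$ compatibly with the filtration, which is what lets us read off $v(a)$ and $\pr(a)$ directly from the $\tilde\sB$-expansion of $a$. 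Once these are in hand the proof is a short bookkeeping argument with no further difficulty.
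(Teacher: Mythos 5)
Your proof is correct and follows the same route as the paper's: expand $a$ in the basis $\tilde\sB$, observe that the degree-reversal property forces $|m_1|\le\cdots\le|m_l|$, identify $\pr(a)$ with the leading-degree subsum, and read off $\LMg(\pr(a))=m_1$. The paper compresses all of this into ``it is clear that $\pr(a)=\sum_{|m_i|=|m_1|}c_i m_i$,'' so you are simply supplying the short bookkeeping the paper leaves implicit.
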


\begin{proof}
	If $$a=c_1\tilde m_1+\cdots+c_l\tilde m_l$$
	where $c_i\in k$, $m_i\in \sB$ and
	$$m_1>\cdots>m_l,$$
	then it is clear that $\pr(a)=\sum\limits_{\substack{1\le i\le l\\|m_i|=|m_1|}}m_i$. Therefore $\LMg(\pr(a))=m_1$.
\end{proof}

The following propositions show that the ordering on $\tilde \sB$ is ``admissible'' meaning that it behaves well with the multiplication. For our convenience, when the context is clear, sometimes we omit ``modulo $I$'' and consider some elements of $P$ as elements of $\gr(A)=P/I$.

\begin{proposition}
	Assume that $m_1, m_2\in \sB$ and $\tilde m_1\tilde m_2\neq 0$. We have
	$$\LMB(\tilde m_1\tilde m_2)\le m_1m_2.$$
	The equality holds if and only if $m_1m_2\in \sB$.
\end{proposition}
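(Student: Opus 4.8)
The plan is to transport the statement down to the commutative algebra $\gr(A)=P/I$ using the identity $\LMB(a)=\LMg(\pr(a))$ proved above, and then to split into two cases according to whether the product monomial $m_1m_2$ survives to a nonzero element of $P/I$. The only inputs needed are the definition of the chosen lift $\tilde m=X_1^{r_1}\cdots X_n^{r_n}$, the proposition comparing $v(ab)$ with $v(a)+v(b)$, the definition of multiplication on $\gr(A)$, and the remark that $\LMg(m)<m$ whenever a monomial $m$ lies outside $\sB$.

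First I would record the elementary facts. Since $\tilde m_i$ is a lift of $m_i\in\sB$, we have $v(\tilde m_i)=|m_i|$ and $\pr(\tilde m_i)$ is the class of $m_i$ in $\gr_{|m_i|}(A)$; hence $\pr(\tilde m_1)\pr(\tilde m_2)$ is exactly the class of $m_1m_2$ in $P/I$. Now suppose $m_1m_2\notin I$, so this class is nonzero. By the valuation proposition, $v(\tilde m_1\tilde m_2)=|m_1|+|m_2|$, and by the definition of the associated-graded product $\pr(\tilde m_1\tilde m_2)=m_1m_2\bmod I$. Therefore $\LMB(\tilde m_1\tilde m_2)=\LMg(m_1m_2\bmod I)$. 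If $m_1m_2\in\sB$ this equals $m_1m_2$, which is the asserted equality; if $m_1m_2\notin\sB$, then $\LMg(m_1m_2\bmod I)<m_1m_2$, a strict inequality.

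It remains to treat the case $m_1m_2\in I$, in which $m_1m_2\notin\sB$ automatically. Then $\pr(\tilde m_1)\pr(\tilde m_2)=0$ in $\gr(A)$, so, using the hypothesis $\tilde m_1\tilde m_2\neq 0$ together with the same valuation proposition, $v(\tilde m_1\tilde m_2)>|m_1|+|m_2|=|m_1m_2|$. The element $\pr(\tilde m_1\tilde m_2)$ is nonzero and homogeneous of degree $v(\tilde m_1\tilde m_2)$, so every monomial occurring in it, in particular $\LMB(\tilde m_1\tilde m_2)=\LMg(\pr(\tilde m_1\tilde m_2))$, has degree $v(\tilde m_1\tilde m_2)>|m_1m_2|$; condition (3) of Definition \ref{def:de6997ec} then forces $\LMB(\tilde m_1\tilde m_2)<m_1m_2$. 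Assembling the cases, $\LMB(\tilde m_1\tilde m_2)\le m_1m_2$ always, with equality precisely when $m_1m_2\in\sB$.

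The one point that deserves attention, and which I view as the crux of the ``only if'' direction, is the clean bookkeeping of the possibilities for $m_1m_2$: it is either in $\sB$ (equality), or a nonzero normal form divisible by some leading monomial of a \Groebner{} basis of $I$ (strict, because the degree-reversed ordering makes the normal form strictly smaller), or a member of $I$ (strict, because the filtration valuation strictly jumps and condition (3) converts this into a drop in the ordering). Everything else is routine manipulation of the definitions of $v$, $\pr$, and the product on $\gr(A)$, so I do not expect a genuine obstacle.
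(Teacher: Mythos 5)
Your proof is correct and takes essentially the same approach as the paper: both transport the statement through the identity $\LMB=\LMg\circ\pr$ and then split into cases according to whether $m_1m_2$ is zero in $\gr(A)$, using the valuation inequality in the trivial case and the remark $\LMg(m)<m$ for $m\notin\sB$ in the nontrivial case. You spell out a few intermediate steps the paper leaves implicit, such as the identification $\pr(\tilde m_1\tilde m_2)=m_1m_2\bmod I$ when the class is nonzero and the explicit appeal to condition~(3) of Definition~\ref{def:de6997ec} to convert the degree jump into a strict inequality in the ordering, but the argument is the same.
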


\begin{proof}
	By the previous proposition, we have
	$$\LMB(\tilde m_1\tilde m_2)=\LMg(\pr(\tilde m_1\tilde m_2)).$$
	If $m_1m_2$ is trivial in $\gr(A)$, then $|\pr(\tilde m_1\tilde m_2)|>|m_1m_2|$. Therefore
	$$\LMg(\pr(\tilde m_1\tilde m_2))<m_1m_2.$$
	If $m_1m_2$ is nontrivial in $\gr(A)$, we have
	$$\LMg(\pr(\tilde m_1\tilde m_2))=\LMg(m_1m_2)\le m_1m_2.$$
	The last equality holds if and only $m_1m_2\in \sB$.
\end{proof}

This proposition directly implies the following.
\begin{corollary}\label{cor:86d78874}
	Assume $a\in A$, $\LMA(a)=\tilde m$ and $q\in \sB$. If $qm\in \sB$, then $\LMB(\tilde qa)=qm$. Otherwise $\LMB(\tilde qa)<qm$.
\end{corollary}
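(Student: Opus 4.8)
The plan is to reduce the corollary to the preceding proposition by expanding $a$ in the basis $\tilde\sB$ and tracking leading terms through multiplication by $\tilde q$. Write $a = c_1\tilde m_1 + \cdots + c_l\tilde m_l$ with $c_i\in k\setminus\{0\}$, $m_i\in\sB$ and $m_1>\cdots>m_l$; since $\LMA(a)=\tilde m$ we have $m_1=m$. Then $\tilde q a = \sum_{i=1}^{l} c_i\,(\tilde q\tilde m_i)$, and it suffices to find the largest monomial of $P$ that survives with nonzero coefficient once each product $\tilde q\tilde m_i$ is re-expanded in the basis $\tilde\sB$.

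First I would dispatch the lower terms $i\ge 2$. Axiom (1) of the monomial ordering turns $m_i<m$ into $qm_i<qm$. If $\tilde q\tilde m_i=0$ the term contributes nothing; otherwise the preceding proposition gives $\LMB(\tilde q\tilde m_i)\le qm_i<qm$, so every monomial occurring in $\tilde q\tilde m_i$ is strictly below $qm$. Hence the only term of $\tilde q a$ that can contribute a monomial $\ge qm$ is $c_1\tilde q\tilde m$.

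Now split on whether $qm\in\sB$. If $qm\in\sB$, then $q\cdot m = qm\ne 0$ in $\gr(A)=P/I$ (the elements of $\sB$ are linearly independent, hence nonzero), so the proposition on $v(ab)$ forces $\tilde q\tilde m\ne 0$, and the equality clause of the preceding proposition gives $\LMB(\tilde q\tilde m)=qm$; thus $\tilde q\tilde m = c\,\widetilde{qm} + (\text{terms }<\widetilde{qm})$ with $c\ne 0$. Combined with the previous paragraph, the coefficient of $\widetilde{qm}$ in $\tilde q a$ is $c_1 c\ne 0$ and no larger monomial appears, so $\LMB(\tilde q a)=qm$. If instead $qm\notin\sB$, the equality clause of the preceding proposition fails, so $\LMB(\tilde q\tilde m)<qm$ whenever $\tilde q\tilde m\ne 0$; together with the lower terms, every monomial of $\tilde q a$ is $<qm$, whence $\LMB(\tilde q a)<qm$.

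The one place that genuinely needs care is the non-cancellation in the case $qm\in\sB$: a priori the error terms produced by the various products $\tilde q\tilde m_i$ could conspire to annihilate the $\widetilde{qm}$-coefficient, but the preceding proposition confines each such error strictly below $qm$ (for $i\ge 2$ via $qm_i<qm$), while the $v(ab)$ proposition guarantees that $\tilde q\tilde m$ itself is nonzero with leading monomial exactly $qm$ — so the top coefficient is $c_1 c\ne 0$ and cannot be killed. Everything else is routine bookkeeping with the order axioms. (In the case $qm\notin\sB$ one should read the conclusion as also asserting $\tilde q a\ne 0$, or interpret it trivially should $\tilde q a$ vanish.)
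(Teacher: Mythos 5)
Your proof is correct and fills in exactly the details the paper treats as immediate; the paper gives no written argument here (it merely says the preceding proposition ``directly implies'' the corollary), and your expansion of $a$ in the basis $\tilde\sB$, followed by the comparison of $\tilde q\tilde m_i$ against $qm$ term by term, is the natural way to make that implication explicit. The two points you flag — using $v(ab)=v(a)+v(b)$ to guarantee $\tilde q\tilde m\ne 0$ when $qm\in\sB$, and the possibility that $\tilde qa=0$ in the ``otherwise'' case — are indeed the only places that need care, and you handle both appropriately.
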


\section{\Groebner{} bases over Filtered-Commutative Algebras}
Let $A$ be a \FComm{} algebra and we proceed with notations defined in the previous section. We focus on left ideals of $A$ first. We will consider left modules over $A$ in the next section.

\begin{definition}
	A \Groebner{} basis $H$ of a left ideal $J$ of $A$ is a finite generating set of $J$ such that the following two ideals of $P$ are equal:
	$$(\LMB (a): a\in J)=(\LMB (h): h\in H).$$
\end{definition}

In order to give an algorithm that calculates a \Groebner{} basis of $J\subset A$ starting from a given generating set of $J$, we need to define reductions in $A$.

\newcommand{\red}{\mathrm{red}}
\begin{definition}
	Given $a, b\in A$, we write
	$$a=c_1\tilde m_1+\cdots+c_l\tilde m_l$$
	where $c_i\in k$, $m_i\in \sB$ and $m_1>\cdots>m_l$. Let $c\tilde m=\LTA(b)$ for some $c\in k$ and $m\in \sB$. One says that $a$ is \emph{reducible} by $b$ if $m_i$ is divisible by $m$ for some $i$. 
	In this case we choose the smallest $i$ and define the \emph{one-step reduction} of $a$ by $b$ by
	$$\red_1(a, b)=a-\frac{c_i}{c}\widetilde{\left(\frac{m_i}{m}\right)}b.$$
	Note that compared with $a$, $\red_1(a, b)$ replaces $c_i\tilde m_i$ in $a$ with other summands strictly less than $\tilde m_i$ since $$\LTg(\pr(\frac{c_i}{c}\widetilde{\left(\frac{m_i}{m}\right)}b))=\frac{c_i}{c}\frac{m_i}{m}cm=c_im_i.$$
\end{definition}

\begin{definition}
	For $a\in A$ and a finite ordered subset $H\subset A$, we say that $a$ is \emph{reducible} by $H$ if $a$ is reducible by some $h\in H$. In this case we assume that $\tilde m$ is the largest summand of $a$ that is reducible by $H$, and $h\in H$ is the first element of $H$ that reduces $\tilde m$. We define the one-step reduction of $a$ by $H$ by
	$$\red_1(a, H)=\red_1(a, h).$$
	We replace $a$ with $\red_1(a, H)$ and iterate this until $a$ is not reducible by $H$. This will terminate because the monomial ordering is well-ordered on $\tilde \sB$. We call the final outcome the \emph{reduction of $a$ by $H$}, denoted by $\red(a, H)$.
\end{definition}

\begin{remark}\label{rmk:114a6d66}
	By the definition of reductions, if $\red(a,H)=0$, we can write
	$$a=c_1\tilde m_1h_1+\cdots+c_l\tilde m_lh_l$$
	where $c_i\in k$, $m_i\in \sB$ and $h_i\in H$ such that $m_i\LMB(h_i)\in \sB$ and
	$$m_1\LMB(h_1)>\cdots > m_l\LMB(h_l).$$
	We can see $\LMB(a)=m_1\LMB(h_1)$.
\end{remark}

The following is a generalized Buchberger's algorithm that calculates a \Groebner{} basis of an ideal of $A$.

\begin{algorithm}\label{alg:30f8cc72}
	Given a finite ordered generating set $H$ ($0\notin H$) of a left ideal $J$ of $A$, we can expand $H$ to a \Groebner{} basis of $J$ by doing the following
	\begin{enumerate}[(1)]
		\item For $a\in H$ and $g\in G$, let $L$ be the least common multiple
		$$L=\lcm(\LMB(a), \LM(g))$$
		and 
		$$q=\frac{L}{\LMB(a)}.$$
		If $\red(\tilde q a, H)$ is nontrivial, append it to $H$.
		\item For $a, b\in H$, let 
		$$L=\lcm(\LMB(a), \LMB(b))$$
		and
		$$t_1=\frac{L}{\LTg(\pr(a))}, ~t_2=\frac{L}{\LTg(\pr(b))}.$$
		If $\red(\tilde t_1 a-\tilde t_2 b, H)$ 
		is nontrivial, append it to $H$.
		\item Repeat (1)(2) until no more elements can be added to $H$.
	\end{enumerate}
\end{algorithm}

We verify the correctness of the algorithm by the following two propositions.

\begin{proposition}
	Algorithm \ref{alg:30f8cc72} always terminates.
\end{proposition}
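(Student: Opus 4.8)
The plan is to run the usual Buchberger termination argument, but phrased entirely in terms of the leading monomials $\LMB(\,\cdot\,)\in\sB$, which live in the honest commutative polynomial ring $P$. The point is that every time the algorithm appends a new element to $H$, the monomial ideal of $P$ generated by the leading monomials $\{\LMB(h):h\in H\}$ becomes strictly larger; since $P$ is Noetherian, this can happen only finitely often, so only finitely many elements are ever appended and the algorithm halts.

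First I would record that each individual reduction $\red(\,\cdot\,,H)$ called in steps (1) and (2) terminates — this is already built into the definition of reduction: because $F_\pnul A=0$, the set $\sB$ (hence $\tilde\sB$) consists only of monomials of degree $<\pnul$, and on these the ordering is a genuine well-order, so the one-step reductions strictly decrease the largest reducible summand and cannot loop forever. Consequently, for any fixed finite $H$ there are only finitely many pairs to examine and each examination is a finite computation, so the only way the algorithm could fail to terminate is by appending infinitely many elements to $H$.

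Now the key step. Suppose at some stage the algorithm appends a nonzero $g$ to the current set $H$, so $g=\red(\tilde q a,H)$ from step (1) or $g=\red(\tilde t_1 a-\tilde t_2 b,H)$ from step (2). By construction $g$ is not reducible by $H$, and unwinding the definition of reducibility this says that \emph{no} monomial occurring in $g$ is divisible by $\LMB(h)$ for any $h\in H$; in particular $\LMB(g)$ is divisible by no $\LMB(h)$, i.e. $\LMB(g)\notin(\LMB(h):h\in H)$ since a monomial lies in a monomial ideal only when it is a multiple of one of the monomial generators. As $\LMB(g)\in(\LMB(h):h\in H\cup\{g\})$ trivially, appending $g$ produces a strict inclusion
$$(\LMB(h):h\in H)\subsetneq(\LMB(h):h\in H\cup\{g\})$$
of ideals of $P$.

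To conclude, $P=k[x_1,\dots,x_n]$ is Noetherian (or invoke Dickson's lemma for monomial ideals directly), so there is no infinite strictly ascending chain of ideals of $P$; hence the algorithm appends only finitely many elements, and once the last one is added, step (3) runs through the finitely many remaining pairs, adds nothing, and stops. The only place that needs a little care is extracting from ``$g$ is not reducible by $H$'' precisely the assertion that $\LMB(g)$ is genuinely new, together with the observation that each reduction itself terminates; but both are immediate from the definitions, so the argument is really the familiar Buchberger termination proof transported verbatim to the filtered-commutative setting.
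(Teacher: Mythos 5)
Your proof is correct and takes essentially the same route as the paper's: the key observation is that each newly appended element has been fully reduced against the current $H$, so its leading monomial $\LMB(g)$ lies outside the monomial ideal $(\LMB(h):h\in H)$ of $P$, forcing a strict increase that Noetherianity of $P$ bounds. You add a bit more detail than the paper (spelling out why each individual call to $\red$ terminates, and why non-reducibility implies $\LMB(g)$ is genuinely new), but the argument is the same.
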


\begin{proof}
	It is clear that the new elements added to $H$ belong to $J$. Since each new element is reduced before it is added to $H$, the ideal
	$$(\LMB(h) : h\in H)$$
	of $P$ will be strictly larger when the element is added. This cannot continue infinitely because $P=k[x_1,\dots,x_n]$ is a Noetherian ring.
\end{proof}

\begin{proposition}\label{prop:d2c371dc}
	When Algorithm \ref{alg:30f8cc72} terminates, for any $a\in J$, we have
	$$\LMB (a)\in(\LMB (h): h\in H).$$
\end{proposition}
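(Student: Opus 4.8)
The plan is to run the classical Buchberger descent, adapted to the filtered setting. Fix a nonzero $a\in J$. Since $H$ generates $J$ and $\tilde\sB$ is a basis of $A$, I can write
$$a=\sum_{i=1}^{l}c_i\tilde q_i h_i,\qquad c_i\in k^\times,\ q_i\in\sB,\ h_i\in H,$$
after discarding any terms with $\tilde q_i h_i=0$. To such a representation I attach the monomial $M:=\max_i q_i\LMB(h_i)$, the maximum taken in $P$ for the given ordering. Every monomial of $\sB$ has degree $<\pnul$, so each $q_i\LMB(h_i)$ has degree $<2\pnul$, and — exactly as in the remark following Definition \ref{def:de6997ec} — the ordering is a well-order on the monomials of $P$ of any fixed bounded degree; hence I may choose a representation of $a$ for which $M$ is minimal. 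By Corollary \ref{cor:86d78874}, $\LMB(\tilde q_i h_i)\le q_i\LMB(h_i)\le M$ for every $i$, so $\LMB(a)\le M$. The proposition will follow once I show that for a minimal representation one has $M\in\sB$ and $M=\LMB(a)$, because then any index $i$ with $q_i\LMB(h_i)=M$ gives $\LMB(h_i)\mid M=\LMB(a)$.

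To see $M\in\sB$ I use step (1) of the terminated algorithm. Suppose $M\notin\sB$; then $\LM(g)\mid M$ for some $g\in G$. For every index $i$ with $q_i\LMB(h_i)=M$ I have $\LMB(h_i)\mid M$, so $q_0:=\lcm(\LMB(h_i),\LM(g))/\LMB(h_i)$ divides $q_i$; since the algorithm has stopped, $\red(\widetilde{q_0}\,h_i,H)=0$, so Remark \ref{rmk:114a6d66} gives a representation of $\widetilde{q_0}\,h_i$ all of whose terms $\tilde p\,h$ satisfy $p\LMB(h)<q_0\LMB(h_i)\le M$. Writing $q_i=q_0q_i'$ and substituting, I replace $c_i\tilde q_i h_i$ by a sum of terms whose attached monomial is $<M$; doing this for every maximal index produces a representation of $a$ with smaller $M$, contradicting minimality. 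So $M\in\sB$; then by Corollary \ref{cor:86d78874} every term with $q_i\LMB(h_i)=M$ has leading monomial $\widetilde M$, with leading coefficient $c_i\lambda_i$ (where $\lambda_i$ is the leading coefficient of $h_i$), so the coefficient of $\widetilde M$ in $a$ equals $\sum_{q_i\LMB(h_i)=M}c_i\lambda_i$.

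If $\LMB(a)<M$ this sum vanishes, and now step (2) takes over. Enumerating the maximal indices $i_1,\dots,i_r$ and applying the telescoping (Abel-summation) identity, $\sum_t c_{i_t}\tilde q_{i_t}h_{i_t}$ becomes a $k$-linear combination of elements $\tilde w\bigl(\tilde t_1 h_{i_t}-\tilde t_2 h_{i_{t+1}}\bigr)$, up to terms whose attached monomial is $<M$, where $\tilde t_1 h_{i_t}-\tilde t_2 h_{i_{t+1}}$ is the $S$-polynomial from Algorithm \ref{alg:30f8cc72}(2) for the pair $(h_{i_t},h_{i_{t+1}})$ and $w=M/\lcm(\LMB(h_{i_t}),\LMB(h_{i_{t+1}}))\in\sB$. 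Since the algorithm has stopped, each such $S$-polynomial reduces to $0$ by $H$, so Remark \ref{rmk:114a6d66} rewrites it via terms $\tilde p\,h$ with $p\LMB(h)<\lcm(\LMB(h_{i_t}),\LMB(h_{i_{t+1}}))$; multiplying by $\widetilde w$ and re-expanding keeps everything strictly below $M$. Substituting yields a representation of $a$ with smaller $M$, a contradiction. Hence $\LMB(a)=M\in\sB$, which finishes the proof.

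The step I expect to require the most care is keeping the bookkeeping of these attached monomials honest through all the rewriting, since $m\mapsto\tilde m$ is not multiplicative: $\widetilde{mn}\neq\tilde m\,\tilde n$ in general. The fact that rescues the descent is that whenever $m,n,mn\in\sB$, the product $\tilde m\,\tilde n$ is again a lift of $mn$ (by the proposition on $v(ab)$, as $mn\neq 0$ in $\gr(A)$), hence differs from $\widetilde{mn}$ only by a combination of $\tilde r$ with $|r|>|mn|$, i.e.\ $r<mn$ by condition (3) of Definition \ref{def:de6997ec}; so every error term produced when re-associating products, re-expanding in $\tilde\sB$, or substituting a $\red(\cdot,H)=0$ expansion has attached monomial strictly below $M$, which is precisely what the descent needs. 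Making this precise amounts to repeated use of Corollary \ref{cor:86d78874}, the admissibility propositions, and the fact that $\sB$ is closed under taking divisors.
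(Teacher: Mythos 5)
Your proof is correct and follows the same Buchberger-descent strategy as the paper: write $a=\sum c_i\tilde q_ih_i$, let $M$ be the maximal attached monomial $q_i\LMB(h_i)$, use part~(1) of the algorithm when $M\notin\sB$, and part~(2) when several terms share the top attached monomial and the leading coefficients cancel. The only organizational differences are that you fix a representation with $M$ minimal up front and argue by contradiction (the paper iterates the rewritings explicitly, invoking well-foundedness of the order on bounded-degree monomials to stop), and that you dispatch all maximal terms simultaneously via the telescoping/Abel-summation trick with consecutive S-polynomials (the paper cancels them two at a time, treating $\tilde m_1h_1-\tilde m_2h_2$ and iterating). Both rest on the same key facts: $\red(\cdot,H)=0$ for the critical elements, Remark~\ref{rmk:114a6d66}, Corollary~\ref{cor:86d78874}, and the observation that the non-multiplicativity of $m\mapsto\tilde m$ only produces correction terms of strictly higher filtration, hence strictly smaller leading monomial.

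One small slip worth flagging: you assert $q_0\LMB(h_i)\le M$, but this inequality points the wrong way. Since the ordering is degree-reversed (condition~(3) of Definition~\ref{def:de6997ec}) and $q_0\LMB(h_i)=\lcm(\LMB(h_i),\LM(g))$ divides $M$, one in fact has $q_0\LMB(h_i)\ge M$. The slip is harmless because you never use that inequality: the descent only needs $p\LMB(h)<q_0\LMB(h_i)$ for each term of the reduced expansion of $\tilde q_0 h_i$ (true because $q_0\LMB(h_i)\notin\sB$, so $\LMB(\tilde q_0 h_i)<q_0\LMB(h_i)$), after which multiplying by $q_i'$ and applying condition~(1) gives $q_i'\,p\LMB(h)<q_i'\,q_0\LMB(h_i)=q_i\LMB(h_i)=M$ — exactly the bound your descent requires. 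Just delete the ``$\le M$'' or replace it with the divisibility statement.
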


\begin{proof}
	Since $H$ is a generating set of $J$, it suffices to prove that for all $c_i\in k$, $m_i\in \sB$ and $h_i\in H$, $1\le i\le l$, such that
	$$m_1\LMB(h_1)\ge\cdots \ge m_l\LMB(h_l),$$
	the proposition is true for
	\begin{equation}\label{eq:91004213}
		a=c_1\tilde m_1h_1+\cdots+c_l\tilde m_lh_l.
	\end{equation}

	Without loss of generality, we assume that the leading coefficient of any $h\in H$ is one which implies that $\LTA(h)=\LMA(h)$.

	If $l=0$, the proof is done since this only happens when $a=0$.   

	If $m_1\LMB(h_1)\notin \sB$, then there exists $g\in G$ such that $\LM(g)~|~m_1\LMB(h_1)$.
	Let
	$$L=\lcm(\LMB(h_1), \LM(g))$$
	and 
	$$q=\frac{L}{\LMB(h_1)}.$$
	We have $q|m_1$ and by the termination of Algorithm \ref{alg:30f8cc72}, 
	$$\red(\tilde q h_1, H)=0.$$
	Thus by Remark \ref{rmk:114a6d66} we can write
	$$\tilde qh_1=c_1^\prime\tilde m_1^\prime h_1^\prime+\cdots+c_s^\prime\tilde m_s^\prime h_s^\prime$$
	for some $c_i^\prime\in k$, $m_i^\prime\in \sB$ and $h_i^\prime\in H$ such that
	$$m_1^\prime \LMB(h_1^\prime)>\cdots>m_s^\prime \LMB(h_s^\prime).$$
	We have 
	$$m_j^\prime \LMB(h_j^\prime)\le m_1^\prime \LMB(h_1^\prime)=\LMB(\tilde q h_1)<q\LMB(h_1)$$
	for all $1\le j\le s$.

	Now we consider
	\begin{align}\label{eq:a9d9b43a}
	\begin{split}
		\tilde m_1h_1 &= \Big(\tilde m_1-\widetilde{\left(\frac{m_1}{q}\right)}\tilde q\Big)h_1+\widetilde{\left(\frac{m_1}{q}\right)}(\tilde qh_1)\\
		&= \Big(\tilde m_1-\widetilde{\left(\frac{m_1}{q}\right)}\tilde q\Big)h_1+\sum_{j=1}^s c_j^\prime\widetilde{\left(\frac{m_1}{q}\right)}\tilde m_j^\prime h_j^\prime.
	\end{split}
	\end{align}
	Since $\gr(A)$ is commutative, we have
	$$v\Big(\tilde m_1-\widetilde{\left(\frac{m_1}{q}\right)}\tilde q\Big)>v(\tilde m_1).$$
	Thus
	$$\LMB\Big(\tilde m_1-\widetilde{\left(\frac{m_1}{q}\right)}\tilde q\Big)\cdot \LMB(h_1) < m_1\LMB(h_1).$$
	We also have
	\begin{align*}
		\LMB\Big(\widetilde{\left(\frac{m_1}{q}\right)}\tilde m_j^\prime\Big)\cdot \LMB(h_j^\prime) &\le \frac{m_1}{q}\cdot m_j^\prime \LMB(h_j^\prime)\\
		&< \frac{m_1}{q}\cdot q\LMB(h_1)\\
		&= m_1\LMB(h_1)
	\end{align*}
	for all $1\le j\le s$.
	The above two equalities and (\ref{eq:a9d9b43a}) show that $\tilde m_1h_1$ can be rewritten as a linear combination of elements in the form of $\tilde mh$ where $m\in \sB$, $h\in H$ and $m\LMB(h)<m_1\LMB(h_1)$. We can keep doing the rewriting on (\ref{eq:91004213}) as long as $m_1\LMB(h_1)\notin \sB$ and this has to stop at some point because $|m_1\LMB(h_1)|<2\pnul$ and the monomial ordering is always a well-order on monomials up to a finite degree.

	Now we can always assume that $m_1\LMB(h_1)\in \sB$ on (\ref{eq:91004213}). If $l>1$ and
	$$m_1\LMB(h_1)=m_2\LMB(h_2),$$
	we can rewrite (\ref{eq:91004213}) in a way similar to the previous part of this proof. In fact, let
	$$L=\lcm(\LMB(h_1), \LMB(h_2))$$
	and
	$$q_1=\frac{L}{\LMB(h_1)}, ~q_2=\frac{L}{\LMB(h_2)}.$$
	It is easy to see that there exists $m^\prime \in \sB$ such that $m_1=m^\prime q_1$ and $m_2=m^\prime q_2$.
	By the termination of Algorithm \ref{alg:30f8cc72}, 
	$$\red(\tilde q_1 h_1-\tilde q_2h_2, H)=0.$$
	Thus we can write
	$$\tilde q_1 h_1-\tilde q_2h_2=c_1^\prime\tilde m_1^\prime h_1^\prime+\cdots+c_s^\prime\tilde m_s^\prime h_s^\prime$$
	for some $c_i^\prime\in k$, $m_i^\prime\in \sB$ and $h_i^\prime\in H$ such that
	$$m_1^\prime \LMB(h_1^\prime)>\cdots>m_s^\prime \LMB(h_s^\prime).$$
	We have 
	$$m_j^\prime \LMB(h_j^\prime)\le m_1^\prime \LMB(h_1^\prime)=\LMB(\tilde q_1 h_1-\tilde q_2h_2)<q_1\LMB(h_1)$$
	for all $1\le j\le s$.

	Now we consider
	\begin{align}\label{eq:4dfddfb6}
	\begin{split}
		& \tilde m_1 h_1-\tilde m_2h_2\\
		=& \Big(\tilde m_1-\tilde m^\prime \tilde q_1\Big)h_1-\Big(\tilde m_2-\tilde m^\prime \tilde q_2\Big)h_2+\tilde m^\prime(\tilde q_1h-\tilde q_2h)\\
		=& \Big(\tilde m_1-\tilde m^\prime \tilde q_1\Big)h_1-\Big(\tilde m_2-\tilde m^\prime \tilde q_2\Big)h_2+\sum_{j=1}^s c_j^\prime\tilde m^\prime\tilde m_j^\prime h_j^\prime.
	\end{split}
	\end{align}
	Again, for $i=1,2$ we have
	$$v\Big(\tilde m_i-\tilde m^\prime\tilde q_i\Big)>v(\tilde m_i).$$
	Thus
	$$\LMB\Big(\tilde m_i-\tilde m^\prime\tilde q_i\Big)\cdot \LMB(h_i) < m_i\LMB(h_i)=m_1\LMB(h_1).$$
	We also have
	\begin{align*}
		\LMB\Big(\tilde m^\prime\tilde m_j^\prime\Big)\cdot \LMB(h_j^\prime) &\le \frac{m_1}{q}\cdot m_j^\prime \LMB(h_j^\prime)\\
		&< \tilde m^\prime\cdot q\LMB(h_1)\\
		&= m_1\LMB(h_1)
	\end{align*}
	for all $1\le j\le s$. The above two equalities and (\ref{eq:4dfddfb6}) show that $\tilde m_1h_1$ can be rewritten as a linear combination of $\tilde m_2h_2$ and elements in the form of $\tilde mh$ where $m\in \sB$, $h\in H$ and $m\LMB(h)<m_1\LMB(h_1)$. We can keep doing the rewriting on (\ref{eq:91004213}) as long as $m_1\LMB(h_1)=m_2\LMB(h_2)$ and this will stop at some point.

	Eventually we get $l=0$ or $m_1\LMB(h_1)\in \sB$ and $m_1\LMB(h_1)>m_2\LMB(h_2)$ (when $l>1$).
	In the second case
	$$\LMB(a)=m_1\LMB(h_1)\in (\LMB (h): h\in H).$$
\end{proof}

The \Groebner{} bases over $A$ behave very similarly to commutative \Groebner{} bases. It is quite straightforward to see that $H$ is a \Groebner{} bases of $J\subset A$ if and only if $\red(a, H)=0$ for any $a\in J$, and the set
$$\{\tilde m\in \tilde \sB:\LMB(h)\nmid m\text{ for all } h\in H\}$$
(modulo $J$) is a $k$-basis of the left $A$-module $A/J$.

\section{Modules}
In previous sections, we study the \Groebner{} bases of left ideals of $A$. Now we consider a finitely generated left module $N$ over $A$.
We can find a number $r$ and a left submodule $M$ of $A^r$ such that $N\iso A^r/M$. Therefore we can study left submodules of $A^r$ in order to study finitely generated left modules over $A$.

Consider the truncated graded ring $\bA=\bA_0\oplus \bA_1=A\oplus A^r$, where the multiplication is given by
$$(a_1, x_1)\cdot (a_2, x_2)=(a_1a_2, a_1x_2+a_2x_1)$$
for $a_i\in A$, $x_i\in A^r$, $i=1,2$.
We can see that left submodules of $A^r$ over $A$ are in one-to-one correspondence to left ideals of $\bA$ that are contained in $\bA_1$. We can filter $\bA$ by
$$F_p\bA=F_pA\oplus (F_pA)^r\subset \bA.$$
Then $\bA$ is actually a filtered-graded commutative algebra and
$$\gr(\bA)=\gr(A)\oplus \gr(A)^r\iso k[x_1,\dots,x_n, e_1,\dots,e_r]/(I, e_ie_j: 1\le i\le j\le r)$$
where $\{\tilde e_i\}$ is the $A$-basis of the free $A$-module $\bA_1=A^r$ and $\pr(\tilde e_i)=e_i$.

Since $M\subset \bA_1$, we are only concerned with monomials in the following set
$$\bB=\{me_i: m\in \sM, 1\le i\le r\}.$$
A monomial ordering on those monomials should satisfy
$$m_1e_i<m_2e_i\Longleftrightarrow m_1<m_2 \text{ for all }i.$$
One example of such orderings could be
\begin{equation}\label{eq:808b68a0}
	m_1e_i<m_2e_j\Longleftrightarrow i>j \text{ or } (i=j \text{ and } m_1<m_2).
\end{equation}
There are many other examples but we assume that we have chosen one of them for $\bB$. Note that the set $\tilde\bB=\{\tilde m\tilde e_i:m\in \sM, 1\le i\le r\}$ is a $k$-basis of $A^r$. If $\tilde m\tilde e_i$ is the leading monomial of $x\in A^r$, we write $\LMBB(x)=me_i$.

\begin{definition}
	A \Groebner{} basis $H$ of a left submodule $M\subset A^r$ over $A$ is a \Groebner{} basis of $M\subset \bA_1$ as a left ideal of the filtered-commutative algebra $\bA$. 
\end{definition}

If we expand the definition in terms of $A$, we get the following.

\begin{proposition}
	A \Groebner{} basis $H$ of a left submodule $M\subset A^r$ over $A$ is a finite generating set of $M$ such that the following two ideals of $k[x_1,\dots,x_n, e_1,\dots,e_r]$ are equal:
	$$(\LMBB (x): x\in M)=(\LMBB (h): h\in H).$$
\end{proposition}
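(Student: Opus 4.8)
The plan is to obtain this statement by merely unwinding two definitions: the definition of a \Groebner{} basis of the left submodule $M\subset A^r$, namely a \Groebner{} basis of $M$ regarded as a left ideal of $\bA$ contained in $\bA_1$, together with the earlier definition of a \Groebner{} basis of a left ideal of a \FComm{} algebra. Essentially all of the content lies in checking that $\bA$ falls under the hypotheses of that earlier theory and that the notations introduced for $A^r$ agree with the ones the general theory produces for $\bA$.

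First I would confirm that $\bA$ is a \FComm{} algebra with finitely generated associated graded algebra. From $F_p\bA=F_pA\oplus(F_pA)^r$ we get $F_\pnul\bA=0$, while $\gr(\bA)=\gr(A)\oplus\gr(A)^r$ is commutative and finitely generated with the presentation $k[x_1,\dots,x_n,e_1,\dots,e_r]/(I,\ e_ie_j:1\le i\le j\le r)$ recorded above. I would then extend the chosen ordering on $\bB$ to a degree-reversed admissible monomial ordering on all monomials of $k[x_1,\dots,x_n,e_1,\dots,e_r]$; since every monomial of $e$-degree at least $2$ already lies in $(I,e_ie_j)$, the quotient basis of $\gr(\bA)$ meeting $\bB$ is exactly $\{me_i:m\in\sB\}$, its lift is $\tilde\bB$, and the leading-monomial operator of the general theory, applied to an element of $\bA_1=A^r$, coincides with $\LMBB$.

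Next I would reconcile the two notions of ``generating set''. Because $\bA_1\cdot\bA_1=0$ and $\bA_0=A$ acts on $\bA_1$ through the module structure of $A^r$, the left ideal of $\bA$ generated by a subset $S\subset\bA_1$ equals $\bA S=\bA_0 S+\bA_1 S=AS$, which is precisely the $A$-submodule of $A^r$ generated by $S$ and is automatically contained in $\bA_1$. Hence a finite set $H$ generates $M$ as a left ideal of $\bA$ if and only if it generates $M$ as a left $A$-submodule of $A^r$.

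Putting the pieces together, $H$ is a \Groebner{} basis of $M\subset A^r$ iff, by definition, it is a finite generating set of the left ideal $M$ of $\bA$ with $(\LMBB(x):x\in M)=(\LMBB(h):h\in H)$ as ideals of $k[x_1,\dots,x_n,e_1,\dots,e_r]$, using the identification of the general leading-monomial operator with $\LMBB$; and by the previous paragraph ``generating set of the left ideal $M$ of $\bA$'' is the same as ``$A$-generating set of $M\subset A^r$''. This is exactly the asserted characterization. The one step demanding genuine care is the verification that the ad hoc order on $\bB$ really extends to a degree-reversed admissible order on the full polynomial ring and that the relations $e_ie_j$ force every basis monomial relevant to $M$ to be linear in the $e_i$, so that $\tilde\bB$ and $\LMBB$ coincide with the objects produced by the general machinery; the remaining steps are a routine translation of definitions.
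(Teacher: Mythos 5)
Your proposal is correct and takes the same route the paper implicitly intends: the paper offers no proof for this proposition beyond the remark ``If we expand the definition in terms of $A$,'' so the content is exactly the definitional unwinding you carry out. Your two substantive checks --- that the left ideal of $\bA$ generated by a subset of $\bA_1$ coincides with the $A$-submodule of $A^r$ it generates (because $\bA_1\cdot\bA_1=0$), and that the leading-monomial operator of the general theory restricted to $\bA_1$ is $\LMBB$ --- are precisely the points the paper leaves tacit, and both are argued correctly.

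One small calibration: you flag as the step ``demanding genuine care'' that the order on $\bB$ should extend to a degree-reversed admissible ordering on all of $k[x_1,\dots,x_n,e_1,\dots,e_r]$. The paper in fact does not claim such an extension and its example ordering (\ref{eq:808b68a0}) does not literally satisfy condition (3) of Definition~\ref{def:de6997ec} across different $e_i$-components (if $i<j$ and $|m_1|>|m_2|$, one has $m_1e_i>m_2e_j$ despite the higher degree). The paper only requires compatibility within each component, $m_1e_i<m_2e_i\Leftrightarrow m_1<m_2$, which, together with the fact that all products relevant to $M\subset\bA_1$ are by elements of $\bA_0=A$, is all that the reduction and Buchberger machinery uses. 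This does not affect the correctness of your unwinding --- the proposition at hand is purely a restatement of definitions and does not depend on that point --- but the caution is aimed at a condition the paper does not actually impose.
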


Since $M\subset A^r$ can be considered as an ideal of $\bA$, Algorithm \ref{alg:30f8cc72} can be used to compute the \Groebner{} bases of $M$ if we replace $A$ with $\bA$. Here we rewrite the algorithm in terms of $A$ and the auxiliary symbols $e_1,\dots,e_r$.

\begin{algorithm}\label{alg:15793e42}
	Given a finite ordered generating set $H$ ($0\notin H$) of a left submodule $M\subset A^r$ over $A$, we can expand $H$ to a \Groebner{} basis of $M$ by doing the following.
	\begin{enumerate}[(1)]
		\item For $a\in H$ and $g\in G$, let $L$ be the least common multiple
		$$L=\lcm(\LMBB(a), \LM(g))$$
		and 
		$$q=\frac{L}{\LMBB(a)}.$$
		If $\red(\tilde q a, H)$ is nontrivial, append it to $H$.
		\item For $a, b\in H$ such that $\LMBB(a)$ and $\LMBB(b)$ contain the same $e_i$ factor, let 
		$$L=\lcm(\LMBB(a), \LMBB(b))$$
		and
		$$t_1=\frac{L}{\LT_{\gr(\bA)}(\pr(a))}, ~t_2=\frac{L}{\LT_{\gr(\bA)}(\pr(b))}.$$
		If $\red(\tilde t_1 a-\tilde t_2 b, H)$ 
		is nontrivial, append it to $H$.
		\item Repeat (1)(2) until no more elements can be added to $H$.
	\end{enumerate}
\end{algorithm}

\section{Syzygies}
\begin{definition}
	Let $(x_1,\cdots,x_s)$ be a tuple of elements of $A^r$.
	\begin{enumerate}
		\item A \emph{syzygy} of $(x_1,\dots,x_s)$ is a tuple $(a_1,\dots,a_s)\in A^s$ such that $a_1x_1+\cdots+a_sx_s=0$.
		\item The set of all syzygies of $(x_1,\cdots,x_s)$ is call the \emph{(first) syzygy module} of $(x_1,\cdots,x_s)$, denoted by $\Syz(x_1,\cdots,x_s)$.
	\end{enumerate}
\end{definition}

It is obvious that $\Syz(x_1,\cdots,x_s)$ is a left submodule of $A^s$. Our goal in this section is to find a generating set of $\Syz(x_1,\cdots,x_s)$.

\begin{definition}
	Let $H=(h_1,\dots,h_s)$ be an ordered \Groebner{} basis of $M\subset A^r$. For $1\le i\le s$ and $g\in G$, let
	$$L=\lcm(\LMBB(h_i), \LM(g))$$
	and
	$$q=\frac{L}{\LMBB(h_i)}.$$
	We have $\red(\tilde qh_i, H)=0$ and it expands to
	$$\tilde qh_i=a_1h_1+\cdots+a_sh_s$$
	for some $a_l\in A$, $1\le l\le s$. This implies that
	$$(-a_1,\dots,-a_{i-1},\tilde q-a_i,-a_{i+1},\dots,-a_s)$$
	is a syzygy of $H$. We denote this syzygy of $H$ by $\sigma_{i,g}$.
\end{definition}

\begin{definition}
	Let $H=(h_1,\dots,h_s)$ be an ordered \Groebner{} basis of $M\subset A^r$. 
	For $1\le i<j\le s$, let
	$$L=\lcm(\LMBB(h_i), \LMBB(h_j))$$
	and
	$$t_i=\frac{L}{\LT_{\gr(\bA)}(\pr(h_i))}, ~t_j=\frac{L}{\LT_{\gr(\bA)}(\pr(h_j))}.$$
	We have $\red(\tilde t_i h_i-\tilde t_j h_j, H)=0$ and it expands to
	$$\tilde t_i h_i-\tilde t_j h_j=a_1h_1+\cdots+a_sh_s$$
	for some $a_l\in A$, $1\le l\le s$. This implies that
	$$(-a_1,\dots,-a_{i-1},\tilde t_i-a_i,a_{i+1},\dots,-a_{j-1},-\tilde t_j-a_j,-a_{j+1},\dots,-a_s)$$
	is a syzygy of $H$. We denote this syzygy of $H$ by $\sigma_{i,j}$.
\end{definition}

The following proposition finds a generating set of $\Syz(h_1,\dots,h_s)$ for a \Groebner{} basis $(h_1,\dots,h_s)$.

\begin{theorem}
	If $H=(h_1,\dots,h_s)$ is an ordered \Groebner{} basis of some left submodule $M\subset A^r$, then the syzygy module $\Syz(H)$ is generated by the following two sets of elements.
	$$\{\sigma_{i,g}\}_{1\le i\le s, g\in G}, ~\{\sigma_{i,j}\}_{1\le i\le s, 1\le j\le s}.$$
\end{theorem}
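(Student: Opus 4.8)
The plan is to mimic the classical proof that $S$-polynomials generate the syzygy module of a Gröbner basis, but carried out in the filtered-commutative setting. Let $\sigma = (a_1,\dots,a_s)\in\Syz(H)$ be an arbitrary syzygy, so $\sum_l a_l h_l = 0$ in $A^r$. Each $a_l$ can be written in the basis $\tilde\sB$, so after expanding we may write $\sum_l a_l h_l = \sum_{k} c_k \tilde n_k h_{l(k)}$ with $c_k\in k$, $n_k\in\sB$, and we set $D = \max_k\, n_k\LMB(h_{l(k)})$ for the ordering on $\bB$. Since the total sum is zero, the leading terms (those summands with $n_k\LMB(h_{l(k)}) = D$) must cancel; I would group these and argue that their cancellation forces, after subtracting an appropriate $k$-combination of the $\sigma_{i,j}$'s (and $\sigma_{i,g}$'s to handle monomials $n_k\LMB(h_{l(k)})\notin\sB$), a new representative of $\sigma$ whose associated $D$ is strictly smaller.

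First I would set up the bookkeeping: for a fixed top-degree monomial $D$ appearing as $n_k\LMB(h_{l(k)})$, consider consecutive pairs among those terms; for each such pair $(i,j)$ the monomial $D$ is a common multiple of $\LMB(h_i)$ and $\LMB(h_j)$, hence divisible by their lcm $L_{i,j}$, so $\tilde n_i h_i - (\text{scalar})\,\tilde{n}_j h_j$ is, up to a unit monomial multiple $\tilde w$, equal to $\tilde w(\tilde t_i h_i - \tilde t_j h_j)$ plus lower-order corrections — here I use the commutativity of $\gr(A)$ exactly as in Proposition \ref{prop:d2c371dc} to control $v(\tilde n_i - \tilde w\tilde t_i)$. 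Replacing $\tilde t_i h_i - \tilde t_j h_j$ by its expansion $a_1 h_1 + \cdots + a_s h_s$ (guaranteed by $\red(\tilde t_i h_i-\tilde t_j h_j,H)=0$) rewrites things in terms of the $\sigma_{i,j}$, and by Remark \ref{rmk:114a6d66} each term of that expansion has strictly smaller leading monomial. Similarly, when some $n_k\LMB(h_{l(k)})\notin\sB$, I would use the $\sigma_{i,g}$ to replace $\tilde q h_i$ by its reduction and lower the order, exactly the two cases already handled in the proof of Proposition \ref{prop:d2c371dc}. Iterating, the order $D$ strictly decreases; since $|D| < 2\pnul$ and the ordering is a well-order up to any finite degree, the process terminates, at which point the representative of $\sigma$ is a $k[x_1,\dots,x_n,e_1,\dots,e_r]$-combination (equivalently $A$-combination, after lifting) of the $\sigma_{i,j}$ and $\sigma_{i,g}$.

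The one point requiring care — and the main obstacle — is verifying that the ``telescoping'' of leading terms actually works in $A$ rather than in $\gr(A)$. In the commutative case the cancelling leading terms literally sum to zero and one telescopes a $k$-linear relation among the $\LT$'s into a sum of $S$-polynomials; here the summands $\tilde n_k h_{l(k)}$ with $n_k\LMB(h_{l(k)}) = D$ have leading monomials $n_k\LMB(h_{l(k)})$ only when $D\in\sB$, and the difference $\tilde n_i h_i - \tilde w\tilde t_i\cdot(\text{stuff})$ is not zero but merely of higher filtration. So I must show that after subtracting the combination of $\sigma$'s, every leftover term genuinely has leading monomial $< D$; this is where Corollary \ref{cor:86d78874} and the filtration estimate $v(\tilde m_i - \tilde m'\tilde q_i) > v(\tilde m_i)$ from the proof of Proposition \ref{prop:d2c371dc} get reused, essentially verbatim. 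Once that estimate is in place the descent is routine, and the theorem follows.
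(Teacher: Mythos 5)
Your proof is correct and follows essentially the same route as the paper, which simply points to the proof of Proposition \ref{prop:d2c371dc} with $A$ replaced by $\bA$ and $a=0$ in equation (\ref{eq:91004213}). You have unpacked the two reduction steps (the $\sigma_{i,g}$ case when $D\notin\sB$ and the $\sigma_{i,j}$ case when the top monomial repeats), correctly identified the crucial filtration estimate $v(\tilde m_i-\tilde m'\tilde q_i)>v(\tilde m_i)$ and Corollary \ref{cor:86d78874} as the tools that make the descent work in $A$ rather than in $\gr(A)$, and used the same termination argument ($|D|<2\pnul$ plus degreewise well-ordering); the only small point you leave implicit is that when the descent halts with a single top term $D\in\sB$, Corollary \ref{cor:86d78874} would force the whole sum to have nonzero leading monomial, contradicting that it equals $0$, so in fact the representation must be driven to the empty sum.
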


\begin{proof}
	This actually follows from the proof of Proposition \ref{prop:d2c371dc} when we replace $A$ with $\bA$ and consider the case $a=0$ in (\ref{eq:91004213}). We can see that the right-hand side of (\ref{eq:91004213}) becomes zero after we rewrite it by linear relations between $H$ that correspond to $m\sigma_{i,g}$ or $m\sigma_{i,j}$ for some $m\in \sM$.
\end{proof}

For any tuple $X=(h_1,\dots,h_t)$ of elements of $A^r$, we can use the generalized Buchberger's algorithm to compute a \Groebner{} basis $H=(h_1,\dots,h_s)$ of the submodule of $A^r$ generated by $h_1,\dots,h_t$ ($s\ge t$). Since all elements of $H$ belong to the submodule, we can find a matrix $Q=\begin{pmatrix}I_t\\Q^\prime\end{pmatrix}\in  M_{s\times t}(A)$ such that
$$H^T=QX^T.$$
Where $I_t$ is the $t\times t$ identity matrix and $(\cdot)^T$ is the transposition of matrices.

\begin{theorem}\label{thm:ef5c591f}
	If $S$ is a matrix such that the row vectors of $S$ generate the syzygy module $\Syz(H)$, then the row vectors of $SQ$ generate $\Syz(X)$.
\end{theorem}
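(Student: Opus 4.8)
The plan is to prove this by pure linear algebra over $A$, treating syzygies as row vectors and keeping every module action and matrix product on the left; no Gr\"obner-basis machinery is needed here beyond the defining relation $H^T = QX^T$ and the special shape of $Q$. (The Gr\"obner-basis property and the reduction theory were already spent in producing $S$ via the previous theorem.)

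First I would record the two matrix identities that drive everything. Writing a syzygy of a tuple as a row vector, the hypothesis that the rows of $S$ generate $\Syz(H)$ entails in particular that each row of $S$ is a syzygy of $H$, i.e.\ $S H^T = 0$, where $H^T$ is the column whose entries are the elements $h_i \in A^r$. On the other side, since the tuple $X = (h_1,\dots,h_t)$ consists of the first $t$ entries of $H = (h_1,\dots,h_s)$, there is a matrix $P = (I_t \mid 0) \in M_{t\times s}(A)$ with $X^T = P H^T$, and because the top block of $Q$ is $I_t$ we get $PQ = I_t$. Thus $P$ is a left inverse of $Q$ compatible with the two presentations of the common submodule $M$.

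Next I would check the two inclusions. The rows of $SQ$ are syzygies of $X$: by associativity of matrix multiplication over $A$, $(SQ)X^T = S(QX^T) = S H^T = 0$, so the rows of $SQ$ lie in $\Syz(X)$. For the converse, take any $v \in \Syz(X)$, i.e.\ $v X^T = 0$. Then $(vP)H^T = v(PH^T) = vX^T = 0$, so $vP \in \Syz(H)$, and hence $vP = cS$ for some row vector $c$ over $A$ because the rows of $S$ generate $\Syz(H)$. Applying $Q$ on the right and using $PQ = I_t$ gives $v = v(PQ) = (vP)Q = c(SQ)$, so $v$ is a left $A$-linear combination of the rows of $SQ$. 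Combining the two inclusions proves the theorem.

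I do not expect a serious obstacle here: the content is exactly the observation in the second paragraph that $Q$ has a left inverse of the right form. The only point demanding care is noncommutativity — every coefficient must be kept on its correct side — but since all the relevant actions are left actions, associativity of matrix multiplication over $A$ makes this bookkeeping routine rather than delicate; the main work is merely to fix conventions so that ``$H^T = QX^T$'' and ``syzygy $=$ row vector'' are mutually consistent.
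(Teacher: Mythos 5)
Your proof is correct and follows essentially the same route as the paper's: both use $SH^T=0$ for one inclusion, and for the other define $P=(I_t\mid 0)$ with $PQ=I_t$, $X^T=PH^T$, deduce $vP\in\Syz(H)$, write $vP$ in terms of the rows of $S$, and recover $v=(vP)Q$. Your version is slightly cleaner in passing directly through $(vP)H^T=vX^T=0$ (the paper has a harmless typo $X^T=PX^T$ where it means $X^T=PH^T$), but the argument is the same.
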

\begin{proof}
	Since $SQX^T=SH^T=0$ we know that the row vectors of $SQ$ belong to $Syz(X)$. On the other hand, assume $vX^T=0$ for some row vector $v\in R^t$. Let 
	$$P=\begin{pmatrix}I_{t} & 0\end{pmatrix}\in M_{t\times s}(A).$$
	Then we have $X^T=PX^T$ and $PQ=I_t$. Thus
	$$0=vX^T=(vPQ)X^T=vPH^T\implies vP\in \Syz(H).$$
	Therefore $vP$ lies in span of row vectors of $S$ and $v=vPQ$ lies in the span of row vectors of $SQ$.
\end{proof}

Combining the two theorems above solves the computation of $\Syz(X)$ for any tuple $X$.

\section{Ext Groups}
In this section, we further assume that $A=\oplus_{i\ge 0}A_i$ is a connected graded algebra, which means that $A_0=k$. The degree of a (homogeneous) element $a\in A$ is denoted by $\deg(a)$. We require the filtration $F_\bullet A$ to be homogeneous. Thus the associated graded algebra $\gr(A)$ has two gradings. To avoid confusion, the degree function on $\gr(A)$ induced by the grading of $A$ is also denoted by $\deg(\cdot)$ while the degree function induced by the filtration is denoted by $|\cdot|$ as before.

Let $N=A^r/M$ be a graded left $A$-module. Here $A^r=A\{\tilde v_i:1\le i\le r\}$ is a direct sum of shifts of $A$ where the degrees of basis $\deg(\tilde v_i)$ are integers. The goal of this section is to compute the Ext groups $\Ext^*_{A}(N,k)$ by constructing a minimal free resolution of $N$. Most of the algorithms are adapted from the theory of commutative \Groebner{} bases (See \cite{KR05}). The only differences are the steps that construct noncommutative \Groebner{} bases.

First we have to make sure that the number $r$ in the representation $N=A^r/M$ is as small as possible. Let $A_+=\oplus_{i>0}A_i$ be the ``graded maximal ideal" of $A$. By Nakayama's lemma, we see that a minimal generating set of the graded left module $N$ of $A$ corresponds to a basis of the $k$-vector space $N/A_+N$. Hence we can use the following algorithm to minimize $r$.

\begin{algorithm}\label{alg:798e215b}
	Let $N$ be a graded left submodule over $A$ with the following presentation
	$$N=A\{\tilde v_1,\dots,\tilde v_r\}/(Ax_1+\cdots+Ax_l)$$
	where $x_i$ are $A$-linear combinations of $\tilde v_j$. We can minimize $r$ in the presentation by the following steps.
	\begin{enumerate}
		\item For $1\le i\le l$, let $y_i$ be the sum of all monomials in $x_i$ that equals $\tilde v_j$ for some $j$. In other words, we remove monomials in $A_+\{\tilde v_1,\dots,\tilde v_r\}$ from summands of $x_i$. Then we have
		$$N/A_+N\iso k\{\tilde v_1,\dots,\tilde v_r\}/(ky_1+\cdots+ky_l).$$
		\item Assume that $\dim_k(N/A_+N)=r^\prime$ for some $r^\prime\le r$. Perform row reductions on $y_i$ to find a subset of $\{\tilde v_1,\dots,\tilde v_r\}$ that generates $N/A_+N$. Without loss of generality we assume that $\{\tilde v_1,\dots,\tilde v_{r^\prime}\}$ generates $N/A_+N$ and we can write $\tilde v_j$ as a $k$-linear combination of $\tilde v_1,\dots,\tilde v_{r^\prime}$ for $j>r^\prime$ in $N/A_+N$.
		\item Perform the exact same row reductions on $x_i$ instead of $y_i$. Then in $N$ we can write $\tilde v_j$ as a $k$-linear combination of $\tilde v_1,\dots,\tilde v_{r^\prime}$ and other monomials in $A_+\{\tilde v_1,\dots,\tilde v_r\}$ for $j>r^\prime$. By iterating this process $\tilde v_j$ can be further rewritten as an $A$-linear combination of $\tilde v_1,\dots,\tilde v_{r^\prime}$. Thus we can replace $x_i$ with
		$$x_i^\prime\in Av_1+\cdots+Av_{r^\prime}$$
		such that
		$$N\iso A\{\tilde v_1,\dots,\tilde v_{r^\prime}\}/(Ax_1^\prime+\cdots+Ax_l^\prime)$$
		where $\{\tilde v_1,\dots,\tilde v_{r^\prime}\}$ is a minimal generating set of $N$.
	\end{enumerate}
\end{algorithm}

Next we need the following simple algorithm which computes a minimal generating set of of a submodule $M\subset A^r$.

\begin{algorithm} \label{alg:76dc86bf}
	Let $M$ be a graded left submodule of $A^r$ generated by $x_1,\dots,x_l$. We can find a subset of $\{x_1,\dots,x_l\}$ that is a minimal generating set of $M$ by the following steps.
	\begin{enumerate}
		\item Order $x_1,\dots,x_l$ by degrees such that
		$$\deg(x_1)\le\cdots\le\deg(x_l).$$
		\item Compute the \Groebner{} basis $H_i$ of the submodule generated by $x_1,\dots,x_i$ for $0\le i\le l$ ($H_0=\emptyset$). We can apply Algorithm \ref{alg:15793e42} on $H_{i-1}\cup \{\red(x_{i}, H_{i-1})\}$ to obtain $H_i$.
		\item When $\red(x_{i}, H_{i-1})=0$, mark $x_i$ as redundant for $1\le i\le l$.
		\item Remove the redundant elements from $x_1,\dots,x_l$. The remaining elements form a minimal generating set of of $M$.
	\end{enumerate}
\end{algorithm}

Now we have all the ingredients to construct a minimal resolution of $N=A^r/M$.

\begin{algorithm}\label{alg:353e422a}
	Let $N=A^r/M$ be a graded left $A$-module where $M\subset A^r$ is generated by $x_1,\dots,x_l$. We construct the first $s+1$ terms $F_0,\dots,F_s$ of a free resolution
	$$\cdots\to F_s\fto{d_s}\cdots\fto{d_3} F_2\fto{d_2} F_1\fto{d_1} F_0=A^r\fto{\epsilon} N$$
	by the following steps.
	\begin{enumerate}
		\item Apply Algorithm \ref{alg:798e215b} to minimize $r$ in the presentation of $N$.
		\item Let $d_0=\epsilon: F_0=A^r\to N$ be the quotient map. Apply Algorithm \ref{alg:76dc86bf} on $M=\ker(d_0)$ to obtain a minimal generating set $\{x_{11},\dots,x_{1l_1}\}$ of $M$.
		\item For $i=1,2,\dots,s$, assume that $\{x_{i1},\dots,x_{il_i}\}$ is a minimal generating set of $\ker(d_{i-1})$. Define 
		$$F_{i}=A\{v_{i1},\dots,v_{il_i}\}, ~\deg(v_{ij})=\deg(x_{ij})$$
		and
		$$d_{i}(v_{ij})=x_{ij}, \text{ for } 1\le j\le l_i.$$
		Apply Theorem \ref{thm:ef5c591f} to compute a minimal generating set of $\ker(d_{i})\iso\Syz(x_{i1},\dots,x_{il_i})$.
	\end{enumerate}
\end{algorithm}

By executing the algorithm we can find that 
$$\Ext_A^{i}(N,k)\iso\begin{cases}
	k^r & i=0,\\
	k^{l_i} & 1\le i\le s.
\end{cases}$$

We call this algorithm a vertical method for computing a minimal resolution. We can write a horizontal version of this algorithm which executes this algorithm degree by degree (from the grading of $A$). We leave it for the reader to write up this version. We refer the reader to \cite[Theorem 4.8.16]{KR05} of a commutative version of a horizontal method which incorporates the elimination algorithm as an optimization and it can be directly transferred to our case.

\section{The Steenrod Algebra}
In the rest of this paper, we would like to apply our noncommutative \Groebner{} bases to the Steenrod algebra at the prime 2 and compute the Ext groups over it. In this section we recall some facts about the Steenrod algebra.

Let $\sA$ be the Steenrod algebra at the prime 2. It is a Hopf algebra generated by symbols $Sq^n$, $n\ge 1$ in degree $n$ with relations given by
$$Sq^iSq^j=\sum_{k=0}^{[i/2]}\binom{j-k-1}{i-2k}Sq^{i+j-k}Sq^k \hspace{0.5cm} (Sq^0=1)$$
for all $i,j>0$ such that $i<2j$. They are call the Adem relations. The coproduct is given by
$$\psi(Sq^n)=\sum_{i=0}^n Sq^i\otimes Sq^{n-i}.$$

In order to define a suitable filtration for $\sA$, first we consider the dual of $\sA$. By the work of Milnor \cite{Milnor58}, the dual of the Steenrod algebra $\sA_*$ can be characterized by
$$\sA_*= \bF_2[\xi_1,\xi_2,\dots], \hspace{0.5cm}\deg(\xi_i)=2^i-1$$
with coproduct given by
$$\psi(\xi_n)=\sum_{i=0}^n \xi_{n-i}^{2^i}\otimes \xi_i. \hspace{0.5cm} (\xi_0=1)$$
The dual basis of the monomial basis of $\sA_*$ is denoted by $\{P(R)\}$ where $R=(r_1,r_2,\dots)$ range over all sequences of non-negative integers which are almost all zero, and $P(R)$ is dual to
$$\xi(R)=\xi_1^{r_1}\xi_2^{r_2}\cdots.$$
Milnor gives a product formula which computes a product $P(R_1)\cdot P(R_2)$ as a linear combination of $P(R)$ again.

We define a weight function $w$ on $\sA$ by
$$w(P(r_1,r_2,\dots))=\sum_{k,i}(2k-1)a_{k,i}$$
where $r_k=\sum_i a_{k,i}2^i$ is the 2-adic expansion. We define a decreasing filtration on $\sA$ by
$$F_p\sA=k\{P(R): w(P(R))\ge p\}.$$
It is not hard to check via the coproduct formula in the dual that $F_\bullet \sA$ satisfies
$$F_p\sA\cdot F_q\sA\subset F_{p+q}\sA$$
and in each degree $n$, we have $F_p\sA_n=0$ when $p$ is large enough.

\begin{proposition}
	The associated graded algebra of the Steenrod algebra is isomorphic to an exterior algebra
	$$\gr(\sA)\iso E[P^i_j:i\ge 0, j>0]$$
	where the symbol $P^i_j$ is the projection of
	$$\tilde P^i_j=P(0,\dots,0,\overset{j}{2^i},0,\dots)$$
	which is dual to $\xi_j^{2^i}$. The gradings of $\gr(\sA)$ are given by
	$$\deg(P^i_j)=2^i(2^j-1), ~|P^i_j|=2j-1.$$
\end{proposition}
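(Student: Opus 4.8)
The plan is to work entirely in the dual, where everything is a concrete combinatorial computation with the monomial basis $\{\xi(R)\}$ of $\sA_* = \bF_2[\xi_1,\xi_2,\dots]$. The key structural input is the coproduct formula $\psi(\xi_n)=\sum_{i=0}^n \xi_{n-i}^{2^i}\otimes\xi_i$, which dualizes to the product on $\sA$. First I would identify, for each pair $(i,j)$ with $i\ge 0$, $j>0$, the weight of $\tilde P^i_j = P(0,\dots,0,2^i,0,\dots)$: since $r_j=2^i$ has $2$-adic expansion consisting of the single bit $a_{j,i}=1$, the weight formula gives $w(\tilde P^i_j) = (2j-1)\cdot 1 = 2j-1$, so $\tilde P^i_j\in F_{2j-1}\sA$ and $v(\tilde P^i_j)=2j-1$ exactly (no higher-weight monomial equals $P(0,\dots,2^i,\dots)$). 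Hence $P^i_j := \pr(\tilde P^i_j)\in\gr_{2j-1}(\sA)$, giving $|P^i_j|=2j-1$; the degree $\deg(P^i_j)=\deg(\xi_j^{2^i})=2^i(2^j-1)$ is immediate from $\deg(\xi_j)=2^j-1$.

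Next I would establish that the $P^i_j$ generate $\gr(\sA)$ and satisfy the exterior-algebra relations. For generation: a general monomial $\xi(R)=\prod_k \xi_k^{r_k}$ with $r_k=\sum_i a_{k,i}2^i$ factors (in $\sA_*$) as $\prod_{k,i}\xi_k^{a_{k,i}2^i}$, i.e. $P(R)$ is, up to terms of strictly higher weight, the product of the $\tilde P^i_k$ over the bits $a_{k,i}=1$. This is where Milnor's product formula enters: one checks that $\prod \tilde P^{i}_k = P(R) + (\text{higher weight})$, so that $\pr$ of this product is exactly the monomial $\prod P^i_k$, and by the weight count both sides lie in $\gr_{w(P(R))}$. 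Summing over all $R$ shows the monomials in the $P^i_j$ span each graded piece, so the $P^i_j$ generate. For the relations: in $\gr(\sA)$ commutativity holds by Definition \ref{def:63d8c34e}(2), so it remains to show $(P^i_j)^2=0$ in $\gr(\sA)$. One computes $\tilde P^i_j\cdot\tilde P^i_j$ via Milnor's formula; the ``naive'' term would be $P(0,\dots,2^{i+1},\dots)=\tilde P^{i+1}_j$, which has weight $2j-1 < 2(2j-1)$, hence projects to $0$ in $\gr_{2(2j-1)}(\sA)$, and every other term in the product likewise has weight $< 2(2j-1)$; therefore $v(\tilde P^i_j{}^2) > 2|P^i_j|$, i.e. $(P^i_j)^2=0$. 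A dimension count then upgrades ``$\gr(\sA)$ is a commutative algebra generated by square-zero elements $P^i_j$'' to the claimed isomorphism with the full exterior algebra $E[P^i_j]$: in each bidegree the monomials in distinct $P^i_j$'s are linearly independent because, by the factorization above, they project the distinct basis elements $P(R)$, and they span by the generation statement.

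The main obstacle is the bookkeeping inside Milnor's product formula: one must verify that in every product $\tilde P^i_j\cdot x$ appearing above, the ``expected'' leading term $P(\text{sum of exponents})$ really occurs with coefficient $1$ and that all correction terms strictly raise the weight $w$. This is a finite but fiddly check about binomial coefficients mod $2$ (Lucas' theorem) controlling which matrices $X=(x_{ab})$ contribute in Milnor's formula, together with the observation that any ``carry'' in forming $R$ from the exponent data replaces a pair of bits by one bit of the next power, strictly decreasing $\sum(2k-1)a_{k,i}$ — wait, rather strictly decreasing is the wrong direction; precisely, a carry $2^i+2^i=2^{i+1}$ at index $k$ changes the weight contribution from $2(2k-1)$ to $(2k-1)$, a strict \emph{decrease}, so the carrying terms land in lower filtration and vanish in the relevant graded piece. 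Once this weight-monotonicity-under-carrying lemma is in hand, everything else is formal, and I would present it as a short self-contained lemma before assembling the proposition.
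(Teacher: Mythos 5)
The paper itself does not prove this proposition; it refers the reader to the proof of Theorem~3.2.2 in Ravenel's book \cite{Ravenel86}. Your plan of giving a self-contained argument in the dual via the weight function is a reasonable alternative, and its skeleton --- compute weights, show the $P^i_j$ generate, verify $(P^i_j)^2=0$, finish by matching square-free monomials in the $P^i_j$ against the Milnor basis $\{P(R)\}$ --- is sound. The weight and degree computations and the bijection underlying the dimension count are correct.

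However, the step verifying $(P^i_j)^2=0$ is incoherent as written: the inequalities are backwards. Since the filtration is multiplicative ($F_p\sA\cdot F_q\sA\subset F_{p+q}\sA$), every Milnor basis element $P(T)$ occurring in $(\tilde P^i_j)^2$ automatically has $w(T)\ge 2(2j-1)$; the claim to be proved is the \emph{strict} inequality $w(T)>2(2j-1)$, which is what makes the class of $(\tilde P^i_j)^2$ in $\gr_{2(2j-1)}(\sA)$ vanish. Your assertion that the ``naive term'' $P(0,\dots,2^{i+1},\dots)$ has weight $2j-1<2(2j-1)$ and ``hence projects to $0$ in $\gr_{2(2j-1)}(\sA)$'' is doubly confused: an element of weight $2j-1$ does not lie in $F_{2(2j-1)}\sA$ at all, so it has no image in $\gr_{2(2j-1)}(\sA)$, and if such a term actually occurred in $(\tilde P^i_j)^2$ it would contradict multiplicativity. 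The correct use of your carry observation is that this term does \emph{not occur}. Likewise ``every other term in the product likewise has weight $<2(2j-1)$'' is the opposite of what multiplicativity guarantees and contradicts your own statement a few lines later that the correction terms ``strictly raise the weight.'' The genuine content --- which you flag but never carry out --- is to show that no $P(T)$ of weight \emph{exactly} $2(2j-1)$ occurs in the square, a check inside Milnor's product formula (which matrices satisfy the row/column constraints for $R=S$, Lucas' theorem for the multinomial coefficients); the same unperformed bookkeeping is also what your generation step ``$\prod\tilde P^i_k=P(R)+(\text{higher weight})$'' rests on. As presented this is a plan with a sign error at its pivot, not a proof.
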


The reader can find a proof of this theorem from the proof of \cite[Theorem 3.2.2]{Ravenel86}.

\begin{remark}
	This associated graded algebra here is actually the same as the associated homogeneous Koszul algebra (defined by Priddy \cite{Priddy70}) of May's associated graded algebra of the Steenrod algebra (see \cite{May64}).
\end{remark}

\begin{remark}
	For prime $p>2$, there is a similar filtration for the Steenrod algebra at the prime $p$ such that the associated graded algebra is a product of exterior algebras and truncated polynomial algebras of height $p$ (see the remarks before \cite[Lemma 3.2.4]{Ravenel86}). Therefore we can also apply our algorithms to the Steenrod algebras at odd primes. However, the author has not implemented these algorithms into computer programs yet.
\end{remark}

Although $\gr(\sA)$ is not a finitely generated commutative algebra, the truncated graded algebra $\sA_{\le n}$ is finitely generated and so is $\gr(\sA_{\le n})$. When we want to compute a minimal resolution of a left $\sA$-module up to degree $n$, it suffices to use $\sA_{\le n}$ instead of $\sA$. 

For our convenience, in the rest of this section, we implicitly consider everything truncated to some degree so that \Groebner{} bases are still well-defined.

When $A=\sA$, consider
$$\gr(\sA)= k[P^i_j:i\ge 0, j>0]/I$$
where the ideal is generated by the set of squares 
$$G=\{(P^i_j)^2:i\ge 0, j>0\}.$$
It is obvious that $G$ is a \Groebner{} basis of $I$. We order the generators $P^i_j$ by degree. In other words, we have
$$P^i_j<P^s_t\Longleftrightarrow \deg(P^i_j)<\deg(P^s_t).$$
This is well defined since all $P^i_j$ have different degrees. We order monomials of $P^i_j$ such that
\begin{align*}
	m<m^\prime\Longleftrightarrow & |m|<|m^\prime| \text{ or } \\
	& |m|=|m^\prime| \text{ and } m \text{ is \emph{greater} than } m^\prime \text{ lexicographically}.
\end{align*}
The author uses this particular monomial ordering because the corresponding computer program is faster than other versions the author has tried so far. For left submodules of $\sA^r$ we use (\ref{eq:808b68a0}) as our monomial ordering.

The set
$$\sB=\{\text{square-free monomials in variables } P^i_j\}$$
is a $\bF_2$-basis of $\gr(\sA)$ while the set
$$\tilde\sB=\{\tilde P^{i_1}_{j_1}\cdots \tilde P^{i_l}_{j_l}:\tilde P^{i_1}_{j_1}<\cdots< \tilde P^{i_l}_{j_l}, l\ge 0\}$$
is a $\bF_2$-basis of $\sA$. The basis $\tilde \sB$ is similar to the ``$P^s_t$ basis'' in Monks \cite{Monks98} with a different orderings of generators.

\section{Computing the Ext Groups over the Steenrod Algebra}
In the previous section, we have shown that the truncated graded algebra $\sA_{\le n}$ is a finitely generated filtered-commutative algebra. 
Therefore we can apply the noncommutative \Groebner{} bases to compute the Ext groups $\Ext_\sA^{*,*}(N, k)$ for a finitely generated graded left $\sA$-module $N$.

The author has implemented the algorithms into computer programs to compute $\Ext_\sA^{*,*}(N, k)$ for various $N$ up to some degree. The program code can be found on Github \cite{LinGH}. In addition to Algorithm \ref{alg:353e422a}, the computer programs incorporate the following optimizations in commutative algebras which can be easily adapted to our noncommutative case.
\begin{enumerate}
	\item Use Buchberger triple to reduce the number of steps needed to compute a \Groebner{} basis. (See \cite[Tutorial 25]{KR08})
	\item Build the resolution degree by degree and use eliminations. (See \cite[Theorem 4.8.16]{KR05})
\end{enumerate}

Since $\sA$ is a Hopf algebra, $\Ext_\sA^{*,*}(\bF_2,\bF_2)$ is actually a commutative algebra over $\bF_2$. After we obtain a resolution by the programs above, we can use methods described in \cite{Bruner89, Bruner93} to compute the products in $\Ext_\sA^{*,*}(\bF_2,\bF_2)$ and module structures of $\Ext_\sA^{*,*}(N,\bF_2)$ over $\Ext_\sA^{*,*}(\bF_2,\bF_2)$ for $A$-modules $N$. However, we always use \Groebner{} bases to do linear algebra other than enumerating $\bF_2$-bases of free $\sA$-modules. The elimination algorithm mentioned above is especially efficient for building maps between chain complexes, which is used to compute products in Ext.

We have computed the cohomology of the Steenrod algebra $\Ext_\sA^{*,*}(\bF_2,\bF_2)$ with outcomes described in Theorem \ref{thm:f088d56}. The computed range is very large and contains $h_7^2$ and $h_8$. The reader can find a graphical diagram of $\Ext_{\sA}^{*,*}(\bF_2,\bF_2)$ on the webpage \cite{LinGraph}.

\makebibliography

\hspace{1cm}\vspace{5pt}

{\scshape Weinan Lin, School of Mathematical Sciences, Peking University, 5 Yi He Yuan Road, Haidian District, Beijing, 100871, China.}

\emph{Email address}: \url{lwnpku@math.pku.edu.cn}

\end{document}